\newcommand{\n}{\mathfrak{n}}
\newcommand{\h}{\mathfrak{h}}
\newtheorem{dfn}{Definition}[section]
\newtheorem{rmk}{Remark}[section]
\newtheorem{thm}{Theorem}[section]
\newtheorem{prop}{Proposition}[section]
\newtheorem{lem}{Lemma}[section]
\newtheorem{ex}{Example}[section]
\newtheorem{sub}{Sublemma}[section]
\newtheorem{fact}{Fact}[section]
\newtheorem{ques}{Question}[section]
\newcommand{\R}{\mathbb{R}}
\newcommand{\Z}{\mathbb{Z}}
\newcommand{\N}{\mathbb{N}}
\begin{document}

\title{On the speed of convergence to the asymptotic cone for non-singular nilpotent groups}
\author{Kenshiro Tashiro}
\date{}
\maketitle
\begin{abstract}

	We study the speed of convergence to the asymptotic cone for Cayley graphs of nilpotent groups.
	Burago showed that $\{(\Z^d,\frac{1}{n}\rho,id)\}_{n\in\N}$ converges to $(\R^d,d_{\infty},id)$ and its speed is $O(\frac{1}{n})$ in the sense of Gromov-Hausdorff distance.
	Later Breuillard and Le Donne gave estimates for non-abelian cases,
	and constructed an example whose speed of convergence is slower than $O(\frac{1}{\sqrt{n}})$.

	For $2$-step nilpotent groups,
	we show that
	if the Mal'cev completion is non-singular,
	then the speed of convergence is $O(\frac{1}{n})$ for any choice of generating set.
In terms of subFinsler geometry,
this condition is also equivalent to the strongly blacket generating condition,
and to the absence of abnormal geodesics on the asymptotic cone.
\end{abstract}

\section{Introduction}\label{sec0}

Let $\Gamma$ be a torsion free nilpotent group generated by a finite symmetric subset $S\subset \Gamma$,
and $\rho_S$ the associated word metric.
The asymptotic cone of $(\Gamma,\rho_S,id)$ is the Gromov-Hausdorff limit of the sequence $\{(\Gamma,\frac{1}{n}\rho_S,id)\}_{n\in\N}$.
In general,
the existence and the uniqueness of the limit is not trivial,
however,
Pansu showed that the asymptotic cone of $(\Gamma,\rho_S,id)$ is uniquely determined up to isometry in \cite{pan}.
The limit space $(N,d_{\infty},id)$ is a simply connected nilpotent Lie group endowed with a subFinsler metric (see section \ref{sec23}).
In particular,
if $\Gamma$ is a $2$-step nilpotent group,
$N$ is isomorphic to the Mal'cev completion of $\Gamma$.

The asymptotic cone and the original metric space are sometimes close in the following sense.
Burago \cite{bur} showed that a Cayley graph of every free abelian group is $(1,C)$-quasi-isometric to its asymptotic cone for some $C>0$.
This implies that the unit ball of a scaled down Cayley graph centered at the identity,
denote $B_{\frac{1}{n}\rho_S}(1)$,
converges to that of the asymptotic cone rapidly.
Namely,
$$d_{GH}(B_{\frac{1}{n}\rho_S}(1),B_{d_{\infty}}(1))=O(n^{-1}),$$
where $d_{GH}$ is the Gromov-hausdorff distance.

Motivated by this result,
Gromov \cite{gro2} asked whether a Cayley graph of a nilpotent group is $(1,C)$-quasi-isometric to its asymptotic cone,
and if not,
what is the speed of convergence.
The first result on non-abelian nilpotent groups is given by Krat \cite{kra},
who showed that the discrete $3$-Heisenberg group $H_3(\Z)$ endowed with a word metric is $(1,C)$-quasi isometric to its asymptotic cone.
For general cases,
Breuillard and Le Donne first gave estimates in \cite{bre2}.
Later the result is sharpened by Gianella \cite{gia},
who showed that
$$d_{GH}(B_{\frac{1}{n}\rho_S}(1),B_{d_{\infty}}(1))=O(n^{-\frac{1}{r}}),$$
where $r$ is the nilpotency class of $\Gamma$.
Moreover,
Breuillard and Le Donne also showed in \cite{bre2} that for the $2$-step nilpotent group $\Z\times H_3(\Z)$,
there is a generating set such that the estimates $O(n^{-\frac{1}{2}})$ is sharp.
From this example,
Fujiwara \cite{fuj2} asked the following question.

\begin{ques}[Question 4 in \cite{fuj2}]\label{ques1}
Let $\Gamma$ be a lattice in a simply connected strictly non-singular nilpotent Lie group,
and $\rho$ a $\Gamma$-invariant proper coarsely geodesic pseudo metric.
Then are $(\Gamma,\rho)$ and its asymptotic cone $(1,C)$-quasi isometric for some $C>0$?
\end{ques}

Here we do not pursue the assumption on the metric such as coarsely geodesic condition,
but it is a large class of pseudo metrics which includes word metrics.
We will mention the group theoretic condition on $\Gamma$.
A simply connected nilpotent Lie group $N$ is {\it strictly non-singular} if for all $z\in Z(N)$,
the center of $N$,
and all $x\in N\setminus Z(N)$,
there is $y\in N$ such that $[x,y]=x^{-1}y^{-1}xy=z$.
For $2$-step nilpotent groups,
that condition is simply called {\it non-singular},
defined as below.
\begin{dfn}
	A simply connected $2$-step nilpotent Lie group $N$ is called non-singular if for all $z\in [N,N]$ and all $x\in N\setminus [N,N]$,
	there is $y\in N$ such that $[x,y]=z$.
\end{dfn}

We answer Question \ref{ques1} in the case where $\Gamma$ is non-singular,
in otherwords $2$-step,
and the endowed metric $\rho$ is a word metric.

\begin{thm}\label{thm1}
	Let $\Gamma$ be a lattice of a simply connected non-singular $2$-step nilpotent group $N$,
	and $\rho_S$ a word metric on $\Gamma$.
	Then there is $C>0$ such that $(\Gamma,\rho_S)$ is $(1,C)$-quasi isometric to its asymptotic cone.
\end{thm}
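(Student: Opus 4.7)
The plan is to prove the two matching inequalities $\rho_S(1, g) \le d_\infty(1, g) + C$ and $d_\infty(1, g) \le \rho_S(1, g) + C$ for every $g \in \Gamma$; by left-invariance of both metrics this yields the desired $(1,C)$-quasi isometry. The lower bound follows from standard BCH estimates interpreting a word as a piecewise horizontal lift to $N$, as in \cite{bre2, gia}, so the real content of the theorem is the upper bound $\rho_S \le d_\infty + C$.

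For the upper bound, I would set $T := d_\infty(1, g)$ and take an optimal horizontal curve $\gamma \colon [0, T] \to N$ with $\gamma(T) = g$. Decomposing $\n = V \oplus [\n, \n]$ in Mal'cev coordinates, the projection $\bar\gamma$ to $V$ is a $\|\cdot\|_{\mathrm{Pansu}}$-geodesic and the central coordinate of $g$ is reconstructed from $\bar\gamma$ through the $2$-step BCH formula. Applying Burago's theorem to the free abelian quotient $\Gamma/[\Gamma, \Gamma]$ produces a word $w_0$ in $S$ of length at most $T + C_1$ whose image in $V$ equals $\bar g$. The evaluation $g_0 := w_0(S)$ then agrees with $g$ on the abelianization, so $z_0 := g^{-1} g_0$ lies in $Z(N) \cap [\Gamma, \Gamma]$. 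A direct BCH computation---comparing $\int_0^T [\bar\gamma, \dot{\bar\gamma}]\, dt$ with the polygonal analogue arising from $w_0$, using the Lipschitz character of $\bar\gamma$ and one integration by parts---yields $|z_0| = O(T)$ in any fixed lattice norm. Correcting this naively by appending commutators of generators at the tail would cost $\Theta(\sqrt T)$, the Heisenberg scaling, recovering only the previously known rate.

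To reduce the correction cost to $O(1)$, I would exploit non-singularity via a bounded number of \emph{insertion pairs}. For each $s \in S$ and positions $p_1 < p_2$, inserting the pair $(s, s^{-1})$ at those positions in $w_0$ leaves the abelianization unchanged and shifts the central part by $[\bar s,\, \bar v_{p_2} - \bar v_{p_1}]$, where $\bar v_p$ is the partial abelianization of $w_0$ up to position $p$. Since $\bar v_p$ traces a geodesic of length $T$ in the direction $\hat v := \bar g / |\bar g|$, varying $p_1, p_2$ fills the segment $\{[\bar s,\, t \hat v] : |t| \lesssim T\}$. Non-singularity of $N$ is exactly the surjectivity of the bracket $[\hat v, \cdot] \colon V \to [\n, \n]$ for every nonzero $\hat v$, and compactness of the projective unit sphere of $V$ upgrades this to a right inverse whose operator norm is bounded uniformly in the direction of $\hat v$. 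Running $s$ over the finite set $\bar S$, one solves a bounded linear system for the insertion positions whose combined shift cancels $z_0$ up to a residual lattice discretization error of size $O(1)$; this residual is then absorbed by a bounded tail of commutators appended to the word. The resulting word has length $T + O(1)$ and evaluates to $g$.

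The hard part I expect is precisely this quantitative, uniform surjectivity of the bracket along geodesic directions. Non-singularity together with compactness of the unit sphere of $V$ is exactly what guarantees it, and it is exactly what fails in $\Z \times H_3(\Z)$: the bracket with the singular $\Z$-factor vanishes identically, no insertion scheme compensates for a central error in that direction, and one is forced back into the $\sqrt T$ Heisenberg scaling. This recovers Breuillard and Le Donne's sharp $O(n^{-1/2})$ lower bound for that example and pinpoints why non-singularity is essential.
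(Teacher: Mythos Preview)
Your overall strategy---compensate an $O(T)$ central error by distributing commutator insertions along the word, using non-singularity for a uniform right inverse of the bracket---is the right idea, and it mirrors what the paper ultimately does (the paper routes through Stoll's subFinsler metric on $N$ rather than working on $\Gamma$ directly, but the geometric mechanism is the same). However, there is a genuine gap at the heart of your insertion argument.

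The claim that the projection $\bar\gamma$ of a $d_\infty$-geodesic is a $\|\cdot\|_{\mathrm{Pansu}}$-geodesic in $V$ is false. What is true is that $\bar\gamma$ has the \emph{same length} $T$ as $\gamma$ (Fact~\ref{fact20}(a)); it need not be a straight segment. Already in $H_3(\R)$, a geodesic from $id$ to $(0,0,T^2/4\pi)$ projects to a closed circle of circumference $T$ in $V_\infty$ (see Example~\ref{ex0}(2)), so $\bar g=0$ and your $\hat v=\bar g/|\bar g|$ is undefined. More generally $\|\bar g\|_\infty$ can be arbitrarily small compared with $T$, so the sentence ``$\bar v_p$ traces a geodesic of length $T$ in the direction $\hat v$'' and the ensuing linear system built on the single direction $\hat v$ both collapse. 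Non-singularity tells you $[\hat v,\cdot]$ is surjective for each nonzero $\hat v$, but gives you nothing when $\hat v$ vanishes or is tiny.

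What is missing is a substitute for the global direction $\hat v$: one must use the \emph{local} directions of pieces of the geodesic and show that enough of them are large. This is exactly the content of Proposition~\ref{prop90} in the paper: divide the geodesic into $M$ pieces $h_1,\dots,h_M$ and prove that the number of indices with $\|\pi(h_i)\|_\infty<R\,d(h_i)$ is bounded by a constant independent of $T$. One then runs your insertion idea piecewise, bracketing each good $h_i$ with $(\pm X_i)$ chosen via Lemma~\ref{lem92} so that $[h_i,X_i]$ points along the residual central error; summing over the $\sim M$ good pieces turns an $O(T)$ central error into an $O(1)$ length overhead. Your sketch would become correct once you replace the single $\hat v$ by these local directions and supply the bad-piece bound. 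A secondary point: the inequality $d_\infty\le\rho_S+C$ is not a free consequence of \cite{bre2,gia} (those give $O(n^{-1/2})$, not $O(1)$); in the paper both inequalities are obtained by the same mechanism, and you should expect to run the insertion argument in both directions.
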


Via the exponential map from the associated Lie algebra $\n$ to $N$,
the non-singular condition is equivalent to every bracket generating subspaces in the Lie algebra being \textit{strongly bracket generating}.
Here a subspace $V\subset \n$ is called strongly bracket generating if for any $X\in V\setminus\left\{0\right\}$,
$\n=V\oplus[X,V]$.
By Theorem A.1 in \cite{led},
strongly bracket generating condition,
equivalently non-singular condition,
is equivalent to the abcense of abnormal curves.

\begin{rmk}
	After the first draft of this paper is completed, we are informed
	by Emmanuel Breuillard that Theorem \ref{thm1} and an argument 
in the same line as our proof are 
known to specialists including him,
but it does not exist
in the literature yet,
and we feel it is worth publishing it. 
\end{rmk}
Theorem \ref{thm1} is on a finitely generated group,
which is related to a claim on a nilpotent Lie group by using the following result by Stoll.

\begin{prop}[Proposition 4.3 in \cite{sto}]\label{prop0}
	Let $\Gamma$ be a finitely generated torsion free $2$-step nilpotent group,
$\rho_S$ a word metric on $\Gamma$,
	and $N$ the Mal'cev completion of $\Gamma$.
	Then there is a left invariant subFinler metric $d_S$ on $N$ and $C>0$ such that $(\Gamma,\rho_S)$ is $(1,C)$-quasi isometric to $(N,d_S)$ by the natural inclusion map.

\end{prop}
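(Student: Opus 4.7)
The plan is to construct the candidate metric $d_S$ directly from $S$ and then verify the additive bound $|\rho_S(g,h)-d_S(g,h)|\le C$ for all $g,h\in\Gamma$. By left invariance it suffices to estimate $|\rho_S(1,g)-d_S(1,g)|$ uniformly for $g\in\Gamma$. Let $\n$ be the Lie algebra of $N$ and fix a vector space complement $V$ to $[\n,\n]$, with projection $\pi:\n\to V$. I define $d_S$ to be the left invariant subFinsler metric whose horizontal distribution is the left translates of $V$ and whose norm on $V$ is the Minkowski functional of the symmetric convex hull $B$ of $\{\pi(\log s):s\in S\}$.

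The upper bound $d_S(1,g)\le\rho_S(1,g)+C_1$ is the easy direction. Given a word $s_{i_1}\cdots s_{i_n}$ of length $n$ representing $g$, concatenate horizontal segments along the directions $\pi(\log s_{i_j})$. The resulting horizontal curve has $\|\cdot\|_B$-length at most $n$, and its endpoint differs from $g$ only in the $[N,N]$ component by a uniformly bounded amount, since in the $2$-step setting the Baker--Campbell--Hausdorff corrections accrued at each step are sums of commutators of uniformly bounded vectors. A short horizontal detour then absorbs this bounded vertical error.

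The lower bound $\rho_S(1,g)\le d_S(1,g)+C_2$ is the main obstacle. A $d_S$-geodesic from $1$ to $g$ projects to a piecewise linear path in $V$ whose edges may be taken along extreme directions $\pi(\log s)$ of $B$. Rationally approximating the edge lengths yields a word in $S$ of at most $d_S(1,g)+O(1)$ letters whose horizontal projection agrees with that of $g$ up to bounded error. The remaining discrepancy lies within bounded distance of an element of $[N,N]\cap\Gamma$, and the task is to realize it by a short word in $S$. Here the key point is that $[\Gamma,\Gamma]$ has finite index in $[N,N]\cap\Gamma$ and is generated by commutators $[s,s']$ with $s,s'\in S$, so a finite case analysis reduces the correction to one of finitely many residues, each of which can be written as a fixed word in $S$. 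Combining these two estimates yields the $(1,C)$ quasi-isometry of the inclusion $(\Gamma,\rho_S)\hookrightarrow(N,d_S)$.
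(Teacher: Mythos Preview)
The paper does not prove this proposition; it is quoted from Stoll. Your attempt, however, has a genuine gap: the metric you construct is not the Stoll metric $d_S$ but the asymptotic cone metric $d_\infty$ of Section~\ref{sec23} (horizontal space a complement to $[\n,\n]$, unit ball the convex hull of the \emph{projected} generators $\pi(\log s)$). For that metric the claimed $(1,C)$-quasi-isometry is known to fail in general---Breuillard and Le Donne's example on $\Z\times H_3(\Z)$ has $|\rho_S-d_\infty|$ of order $n^{1/2}$, which is precisely the phenomenon the present paper is about.

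The concrete error is in your upper bound. The horizontal path obtained by concatenating segments along $\pi(\log s_{i_j})$ does \emph{not} end within a uniformly bounded $[N,N]$-distance of $g$. In step~$2$ the BCH commutator terms $\tfrac12\sum_{j<k}[X_{i_j},X_{i_k}]$ are identical for the original word and for its projection (since $[\pi X,\pi Y]=[X,Y]$ when $X-\pi X$ is central); the actual vertical discrepancy is $\sum_{j=1}^n(\log s_{i_j}-\pi\log s_{i_j})$, the accumulated central parts of the generators themselves, which is of order $n$ rather than $O(1)$. A horizontal detour correcting an $O(n)$ vertical error costs $O(\sqrt n)$ in length, destroying the additive bound. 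Stoll's actual construction takes the horizontal space to contain $\{\log s:s\in S\}$ and the unit ball to be their symmetric convex hull \emph{without projecting}; then the concatenation of one-parameter segments $t\mapsto\exp(t\log s_{i_j})$ is already horizontal of total length $\le n$ and ends exactly at $g$, giving $d_S(1,g)\le\rho_S(1,g)$ with zero error. After this correction your sketch of the lower bound is along the right lines.
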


He constructed such a metric $d_S$ explicitly,
now called the {\it Stoll metric}.

It is easy to see that the asymptotic cones of $(\Gamma,\rho_S,id)$ and $(N,d_S,id)$ are isometric,
hence the following theorem implies Theorem \ref{thm1}.

\begin{thm}[Precisely in Theorem \ref{thmneo}]\label{thm3}
	Let $N$ be a simply connected non-singular $2$-step nilpotent Lie group endowed with a left invariant subFinsler metric $d$.
	Then there is $C>0$ such that $(N,d)$ is $(1,C)$-quasi isometric to its asymptotic cone.
\end{thm}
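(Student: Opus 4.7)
The plan is to show $|d(x,y) - d_\infty(x,y)| \leq C$ for all $x, y \in N$, with $C$ depending only on $N$ and $d$. By left-invariance of both metrics, this reduces to the one-point bound $|d(e, p) - d_\infty(e, p)| \leq C$ for $p \in N$, where $e$ denotes the identity. I would fix a linear decomposition $\n = V \oplus [\n, \n]$ with $V$ a complement to the derived subalgebra, use exponential coordinates to identify $N$ with $V \oplus [\n, \n]$ as a set, and identify the asymptotic cone $(N, d_\infty)$ with $N$ equipped with the Carnot subFinsler metric whose horizontal distribution is the left-invariant extension of $V$.

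The core technical input, and where non-singularity is essential, is a \emph{central correction lemma}: there is $C_0 > 0$ such that for every $z \in [\n, \n]$ with $\|z\| \leq 1$ the point $\exp(z)$ lies within $d$-distance $C_0$ of the identity. To prove it, fix any unit $x \in V$; non-singularity implies that $\mathrm{ad}(x) \colon V \to [\n, \n]$ is surjective, and a compactness argument on the unit sphere of $[\n, \n]$ yields a uniform $K > 0$ such that every unit $z$ can be written as $z = [x, y]$ with $\|y\| \leq K$. The commutator loop $\exp(x)\exp(y)\exp(-x)\exp(-y)$ then realizes the central translation by $[x, y] = z$ as a $d$-path of uniformly bounded length.

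Using this lemma, the two inequalities $d \leq d_\infty + C$ and $d_\infty \leq d + C$ would be obtained by a symmetric comparison of geodesics. Given a geodesic for one metric from $e$ to $p$ of length $L$, one extracts its horizontal projection onto $V$ and uses that trajectory as a template for an admissible curve for the other metric. In the 2-step setting the central coordinate of the endpoint of a horizontal curve is given by a symplectic-type integral depending only on the horizontal projection, so both curves reach points sharing the same $V$-coordinate, and the residual central discrepancy can be absorbed by slightly perturbing the template.

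The hard part will be quantifying this absorption so that the total error is bounded uniformly in $L$. A naive correction concentrated at the endpoint costs $O(\sqrt{L})$, essentially recovering the Breuillard--Le Donne--Gianella bound; obtaining $O(1)$ requires distributing the correction along the path. A perturbation of amplitude $\varepsilon$ costs order $\varepsilon^2$ per unit length while the generated central area scales as $\varepsilon$ times the accumulated lever arm $O(L)$, so the choice $\varepsilon \sim L^{-1}$ absorbs an $O(L)$ central error at vanishing total cost. Non-singularity is precisely what makes this perturbation available in every central direction at every point of the path; making the resulting estimates quantitative and uniform in $L$ is the main technical obstacle.
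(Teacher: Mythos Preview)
Your high-level strategy---project a geodesic for one metric to obtain a template for the other, then distribute commutator-type corrections along the path to absorb the residual central defect---is precisely the paper's. The gaps lie in the two quantitative claims that carry the argument.

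The $\varepsilon^2$ cost for a perturbation of amplitude $\varepsilon$ is not available for an arbitrary subFinsler norm: it presumes a $C^2$ unit sphere and a perturbation tangent to it at the current control $u(t)$, but the tangent hyperplane has codimension one in $V$ and need not map onto the required central direction under $\mathrm{ad}$. The paper avoids this with a cruder, norm-independent device: it cuts the geodesic into a \emph{fixed} number $M$ of pieces $h_1,\dots,h_M$ and conjugates each suitable piece by $\exp(rX_i)$, paying linear cost $2r$ per piece and producing the central correction $\exp(r[\pi(h_i),X_i])$. The genuinely hard point, which you flag as ``the main technical obstacle'' but do not resolve, is guaranteeing that enough pieces have horizontal displacement $\|\pi(h_i)\|_\infty$ comparable to their length $t/M$; without this the lever arm collapses and the distributed correction is no more efficient than a single endpoint correction. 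This is the content of Proposition~\ref{prop90} (all of Section~\ref{sec9}): for a $d$-geodesic of length $t\ge M$ cut into $M$ equal pieces, at most $K/(1-R)^2$ pieces satisfy $\|\pi(h_i)\|_\infty<R\,t/M$, with $K$ independent of $t$. Only with this in hand can one fix $M$, invoke non-singularity (Lemma~\ref{lem92}) to pick $X_i$ with $\|[\pi(h_i),X_i]\|_{[N,N]}\ge L_0\|\pi(h_i)\|_\infty$ in the direction of the defect $h$, and conclude that the total added length $2r(M-|I|)$ is bounded independently of $t$. As a minor remark, your central correction lemma (bounded central elements lie at bounded $d$-distance from $e$) holds in any $2$-step group by compactness and does not need non-singularity; non-singularity enters only through the uniform surjectivity bound at each good piece.
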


\begin{rmk}
	In the original setting of Question \ref{ques1},
	that is the metric $\rho$ is a coarsely geodesic metric,
our method cannot be applied because of the following reason.
Roughly speaking,
we show the main result by constructing a path in $(N,d_{\infty})$ from a geodesic in $(N,d)$ and vice versa.
The scheme is;
\begin{enumerate}
	\item Project the geodesic $c_0$ in $(N,d)$ onto its abelianized normed space,
		say $c_1$.
	\item Construct a path $c_2$ in the abelianization of $(N,d_{\infty})$ which is close to the $c_1$ in the (Gromov--)Hausdorff sense.
	\item Lift up the $c_2$ to a path $c_3$ in $(N,d_{\infty})$.
	\item Slight variation of the $c_3$ can have the same endpoints with the $c$.
\end{enumerate}
Finally we find that the length of $c_3$ is same to the $c_0$ up to constant.

In coarsely geodesic setting,
the second step is impossible since geodesics on $\R^n$ with a $\Z^n$-invariant metric may be quite far from the straight segment in the Hausdorff sense (cf. \cite{bur2} and \cite{ban}).

\end{rmk}

\section*{Acknowledgement}

The author would like to express his great thanks to Professor Koji Fujiwara for many helpful suggestions and comments.
He would like to offer his appreciation to Professor Emmanuel Breuillard for informing him of precise research status.
He would like to be grateful to Professor Enrico Le Donne for sharing information on his work with him and bringing \cite{led} into his attention.

\section{The asymptotic cone of a nilpotent Lie group endowed with a left invariant subFinsler metric}\label{sec2}

Let $N$ be a simply connected $2$-step nilpotent Lie group,
and $d$ a left invariant subFinsler metric on $N$.
In this section,
we shall construct the asymptotic cone of $(N,d,id)$.

\subsection{Nilpotent Lie groups and nilpotent Lie algebras}\label{sec21}

Let $\n$ be the Lie algebra associated to $N$.
It is known that the exponential map from $\n$ to $N$ is a diffeomorphism.
By the Baker-Campbell-Hausdorff formula,
the group operation on $N$ is written by
$$\exp(X)\cdot\exp(Y)=\exp(X+Y+\frac{1}{2}[X,Y]).$$
In particular,
we can identify the commutator on $N$ and the Lie bracket on $\n$ as
$$[\exp(X),\exp(Y)]=\exp([X,Y]).$$
Hence we sometimes identify elements in $N$ and $\n$ via the exponential map.

Let $V_{\infty}$ be a subspace of $\n$ such that
$$V_{\infty}\cap[\n,\n]=(0) ~~\text{and}~~ V_{\infty}+[\n,\n]=\n.$$
Then $\n$ is spanned by the direct sum $V_{\infty}\oplus[\n,\n]$ and any element in $\n$ will be written by $X+Y$,
where $X\in V_{\infty}$ and $Y\in[\n,\n]$.

To such a decomposition,
we can define the following two endomorphisms of $N$ and $\n$.

\noindent\underline{$\delta_t$:\ Dilation}\\
We may associate a Lie algebra automorphism $\delta_t:\n\to\n$ $(t\in\R_{>0})$ which is determined by
$$\delta_t(X+Y)=tX+t^2Y.$$
This Lie algebra automorphism is called the {\it dilation}.
It induces the diffeomorphism of $N$ via the exponential map
(we also denote that diffeomorphism by $\delta_t$).

\noindent\underline{$\pi$:\ Projection to $V_{\infty}$}\\
	Set a mapping $\pi:\n\to V_{\infty}$ by $\pi(X+Y)=X$.
By the Baker-Campbell-Hausdorff formula,
it is easy to see that $\pi\circ\log:N\to V_{\infty}$ is a surjective group homomorphism,
as we see $V_{\infty}$ an abelian Lie group.
We will simply denote the homomorphism $\pi\circ\log$ by $\pi$.

\subsection{Left invariant subFinsler metrics}\label{sec22}

Let $N$ be a connected Lie group with the associated Lie algebra $\n$.
Suppose a vector subspace $V\subset\n$ and a norm $\|\cdot\|$ on $V$ are given.
Then $V$ induces the left invariant subbundle $\Delta$ of the tangent bundle of $N$.
Namely,
a vector $v$ at a point $p\in N$ is an element of $\Delta$ if $(L_p)^{\ast}v\in V$.
For such $v$,
we set $\|v\|:=\|(L_p)^{\ast}v\|$.
This $\Delta$ is called a {\it horizontal distribution}.

One says that an absolutely continuous curve $c:[a,b]\to N$ with $a,b\in\R$ is {\it horizontal} if the derivative $\dot c(t)$ is in $\Delta$ for almost all $t\in[a,b]$.
Then for $x,y\in N$,
one may define a subFinsler metric as
$$d(x,y)=\inf\left\{\int_a^b\|\dot c(t)\|dt\Big|c~\text{is horizontal}~,c(a)=x,c(b)=y\right\}.$$
Note that such $d$ is left invariant.

Chow showed that any two points in $N$ are connected by a horizontal path if and only if $V$ is {\it bracket generating},
that is,
$$V+[V,V]+\cdots+\underbrace{[V,[V,[\cdots]\cdots]}_r=\n.$$
In particular,
the subspace $V_{\infty}$,
given in Section \ref{sec21},
is bracket generating.

\subsection{The asymptotic cone}\label{sec23}

Roughly speaking,
an asymptotic cone is a metric space which describes how a metric space looks like when it is seen from very far.
This is characterized by the Gromov-Hausdorff distance between compact metric spaces.

\begin{dfn}
Let $(X,d,p)$ be a pointed proper metric space.
If the sequence of pointed proper metric spaces $\{(X,\frac{1}{n}d,p)\}_{n\in\N}$ converges to a metric space $(X_{\infty},d_{\infty},p_{\infty})$ in the Gromov-Hausdorff topology,
then $(X_{\infty},d_{\infty},p_{\infty})$ is called the asymptotic cone of $(X,d,p)$.
\end{dfn}

\begin{rmk}
It is not trivial whether the limit exists or not.
For nilpotent Lie groups endowed with left invariant subFinsler metrics,
the existence and the uniqueness of the limit is shown in \cite{bre}.
For more precise information,
see \cite{van}
\end{rmk}

Let us recall the definition of the Gromov-Hausdorff topology on the set of pointed proper metric spaces.
A sequence of pointed proper metric spaces $\{(X_n,d_n,p_n)\}_{n\in\N}$ is said to converge to the pointed metric space $(X_{\infty},d_{\infty},p_{\infty})$ if for any $R>0$,
the sequence of metric balls $\{B_{d_n}(p_n,R)\}_{n\in\N}$ converges to $B_{d_{\infty}}(p_{\infty},R)$ in the Gromov-Hausdorff topology on the set of compact metric spaces.

The Gromov-Hausdorff topology on the set of compact metric spaces is characterized by the Gromov-Hausdorff distance.
For compact metric spaces $(X,d_X)$ and $(Y,d_Y)$,
it is determined by
$$d_{GH}(X,Y):=\inf\left\{d_{H,Z}(X,Y)\Big|Z=X\sqcup Y,d_Z|_X=d_X,d_Z|_Y=d_Y\right\},$$
Here $d_{H,Z}$ is the Hausdorff distance on compact subsets on $Z$,
namely the smallest $r>0$ such that $X$ lies in the $r$-neighborhood of $Y$ and $Y$ lies in the $r$-neighborhood of $X$.

\vspace{12pt}

Suppose a left invariant subFinsler metric $d$ on $N$ is determined by a bracket generating subspace $V\subset \n$ and a norm $\|\cdot\|$ on $V$.
By using the homomorphism $\pi$,
define a left invariant subFinsler metric $d_{\infty}$ on $N$ which is determined by the subspace $V_{\infty}\subset\n$ and the norm $\|\cdot\|_{\infty}$ on $V_{\infty}$ whose unit ball is $\pi(B_{\|\cdot\|}(1))$,
where $B_{\|\cdot\|}(1)$ is the unit ball of the normed space $(V,\|\cdot\|)$ cntered at $0$.

\begin{thm}[Theorem 3.2 in \cite{bre2}]\label{thm210}

For any sequence $\{g_i\}_{i\in\N}$ on $N$ such that $d(g_i)\to\infty$ as $i\to\infty$,
$$\lim_{i\to\infty}\frac{d_{\infty}(g_i)}{d(g_i)}=1.$$

In particular,
the asymptotic cone of $(N,d,id)$ is isometric to $(N,d_{\infty},id)$.
\end{thm}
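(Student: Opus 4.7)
\smallskip\noindent\textbf{Plan.}
I would prove the quantitative comparison
\[
|d(g)-d_\infty(g)| \,\leq\, C\sqrt{d(g)+1} \qquad \text{for all } g\in N,
\]
for some constant $C$ depending only on the data. This yields $d_\infty(g_i)/d(g_i)\to 1$ immediately, and since $d_\infty\circ\delta_n=n\,d_\infty$ (so that each $(N,\tfrac{1}{n}d_\infty,id)$ is isometric to $(N,d_\infty,id)$ via $\delta_n$), it also forces the rescaled spaces $(N,\tfrac{1}{n}d,id)$ to Gromov--Hausdorff converge to $(N,d_\infty,id)$.

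\smallskip\noindent\textbf{Central ball--box estimate.}
The auxiliary estimate at the heart of the argument is that there is $C_0$ with
\[
d(\exp z),\ d_\infty(\exp z) \leq C_0\sqrt{\|z\|} \qquad \text{for every } z\in[\n,\n].
\]
For $d_\infty$ this follows from the dilation identity $\delta_t\exp(z)=\exp(t^2 z)$ on $[\n,\n]$ together with compactness of the unit sphere; for $d$ one uses that $[V,V]=[\n,\n]$ in the $2$-step case, writes $z$ as a sum of brackets $[v,v']$ with $v,v'\in V$, and realises each $\exp([v,v'])$ by a parallelogram loop along $v$ and $v'$ of length $\lesssim\sqrt{\|[v,v']\|}$ via the Baker--Campbell--Hausdorff formula.

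\smallskip\noindent\textbf{Two-sided comparison.}
For $d_\infty(g)\leq d(g)+O(\sqrt{d(g)})$, take a nearly optimal $d$-horizontal curve $\gamma(t)=\exp(u(t)+w(t))$ from $id$ to $g=\exp(X+Y)$ of length $L$ and decompose its left-translated velocity $v(t)=v_1(t)+v_2(t)\in V_\infty\oplus[\n,\n]$. Since $[\n,\n]\subset Z(\n)$, BCH gives $\dot u=v_1$ and $\dot w=v_2+\tfrac{1}{2}[u,v_1]$. The curve $\tilde\gamma$ obtained by retaining only the $V_\infty$-part of the velocity is $d_\infty$-horizontal of length $\int_0^L\|v_1(t)\|_\infty\,dt\leq L$ (using $\|\pi(v)\|_\infty\leq\|v\|_V$), and ends at $\tilde g=\exp(X+\tilde Y)$ with $\tilde g^{-1}g=\exp\!\bigl(\int_0^L v_2\,dt\bigr)\in\exp([\n,\n])$ of norm $\leq C_1 L$. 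The central ball--box then gives $d_\infty(\tilde g^{-1}g)\leq C_0\sqrt{C_1 L}$, producing the desired bound by the triangle inequality. The reverse inequality is dual: by the definition of $\|\cdot\|_\infty$ together with a measurable selection, each $v_1\in V_\infty$ admits a lift $v\in V$ with $\pi(v)=v_1$ and $\|v\|_V=\|v_1\|_\infty$, so replacing the velocity of a near-optimal $d_\infty$-geodesic by such lifts yields a $d$-horizontal curve whose endpoint differs from $g$ by a central element of norm $O(d_\infty(g))$, to be corrected by the central ball--box applied to $d$.

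\smallskip\noindent\textbf{Main obstacle.}
The crucial point is that the central error $\int_0^L v_2\,dt$ grows linearly in $L$, so the sharp square-root exponent in the central ball--box estimate is essential---a constant bound would be useless. A secondary technicality is the measurable selection of lifts $V_\infty\to V$ in the reverse direction, which is routine given that the selection is over finite-dimensional normed data.
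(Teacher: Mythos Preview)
The paper does not give its own proof of this statement: it is quoted as Theorem~3.2 of Breuillard--Le~Donne \cite{bre2} and used as a black box. So there is no in-paper argument to compare against.

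Your argument is correct and in fact establishes more than the theorem asserts. The central ball--box bound $d(\exp z),\,d_\infty(\exp z)\le C_0\sqrt{\|z\|}$ for $z\in[\n,\n]$ is valid in the $2$-step case exactly as you say (for $d$ one uses $[V,V]=[\n,\n]$, which follows from bracket-generation and $2$-step), and the BCH bookkeeping showing $\tilde g^{-1}g=\exp\bigl(\int_0^L v_2\,dt\bigr)$ is right. The measurable lift $V_\infty\to V$ with $\|\cdot\|$-norm equal to $\|\cdot\|_\infty$ exists by a standard selection theorem applied to the compact-valued map $v_1\mapsto\pi|_V^{-1}(v_1)\cap\partial B_{\|\cdot\|}(\|v_1\|_\infty)$, and the resulting control is bounded measurable, so the lifted trajectory is well defined. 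Combining the two directions gives $|d(g)-d_\infty(g)|\le C\sqrt{d(g)+1}$, hence the ratio statement and the identification of the asymptotic cone.

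It is worth noting where your bound sits relative to the rest of the paper. The estimate $|d-d_\infty|=O(\sqrt{d})$ that you obtain is precisely the $r=2$ case of the Breuillard--Le~Donne/Gianella rate $O(d^{1-1/r})$ mentioned in the introduction, and it is sharp in general: the example $\Z\times H_3(\Z)$ shows one cannot do better without further hypotheses. The main theorem of the paper (Theorem~\ref{thmneo}) upgrades your $O(\sqrt{d})$ to $O(1)$ under the additional non-singularity assumption, via a considerably more delicate construction that distributes the central correction along the ``good'' pieces of a geodesic (Proposition~\ref{prop90} and Lemma~\ref{lem92}). Your method, by contrast, pays the full square-root cost of correcting the central error in one shot; this is unavoidable without non-singularity, and is exactly what is needed for Theorem~\ref{thm210}.
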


The pair $(N,V_{\infty})$ is called a {\it Carnot group}.
If a subFinsler metric is induced from a Carnot group,
such as $d_{\infty}$,
then it satisfies the following properties.

\begin{fact}\label{fact20}
\begin{itemize}
\item[(a)] For every horizontal path $c$,
$$length(c)=length(\pi\circ c).$$
In particular,
$$\|\pi(g)\|_{\infty}\leq d_{\infty}(g),$$
and the equality holds if and only if $g\in \exp(V_{\infty})$.
\item[(b)] For $x,y\in N$,
$$d_{\infty}(\delta_t(x),\delta_t(y))=td_{\infty}(x,y).$$
\end{itemize}
\end{fact}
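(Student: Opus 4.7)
The plan rests on two algebraic features of the polarized group $(N, V_\infty)$: first, the complementary projection $\pi:\n\to V_\infty$ sending $X+Y$ to $X$ is a Lie algebra homomorphism (since $[\n,\n]\subseteq\ker\pi$ and $V_\infty$ is abelian), so $\pi:N\to V_\infty$ is a Lie group homomorphism; second, the dilation $\delta_t$ is a Lie algebra automorphism acting by $t\cdot\mathrm{id}$ on $V_\infty$ and by $t^2\cdot\mathrm{id}$ on $[\n,\n]$.

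For part (a), suppose $c$ is a horizontal curve with left-invariant velocity $X(t)\in V_\infty$, so $\dot c(t)=(L_{c(t)})_\ast X(t)$. Using that $\pi$ is a group homomorphism (and that left translation in the abelian target is trivial at the level of Lie algebras), the commuting square $\pi\circ L_p = L_{\pi(p)}\circ\pi$ combined with the chain rule gives
\[
(\pi\circ c)'(t) \;=\; d\pi_{c(t)}(\dot c(t)) \;=\; d\pi_{\mathrm{id}}(X(t)) \;=\; X(t).
\]
By construction of $\|\cdot\|_\infty$, one has $\|(\pi\circ c)'(t)\|_\infty=\|X(t)\|_\infty = \|\dot c(t)\|$, so integrating yields $\mathrm{length}(\pi\circ c)=\mathrm{length}(c)$. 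The inequality $\|\pi(g)\|_\infty \le d_\infty(g)$ then drops out: for any horizontal $c$ from $\mathrm{id}$ to $g$, the projected path from $0$ to $\pi(g)$ in the normed space $(V_\infty,\|\cdot\|_\infty)$ has length at least $\|\pi(g)\|_\infty$, and $c$ may be taken to approximate $d_\infty(g)$. The ``if'' direction of the equality statement is witnessed by the explicit horizontal lift $t\mapsto\exp(t\log g)$, $t\in[0,1]$, which has length $\|\log g\|_\infty = \|\pi(g)\|_\infty$ whenever $\log g\in V_\infty$.

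For part (b), I would use the equivariance $\delta_t\circ L_p = L_{\delta_t(p)}\circ \delta_t$ (following from $\delta_t$ being a group automorphism) to show that $\delta_t\circ c$ has left-invariant velocity $d\delta_t(X(s)) = tX(s)\in V_\infty$, and this has norm $t\|X(s)\|_\infty$. Hence $\delta_t\circ c$ is horizontal with $\mathrm{length}(\delta_t\circ c)=t\cdot\mathrm{length}(c)$. Since $\delta_{1/t}$ inverts this correspondence between horizontal curves, passing to the infimum over horizontal curves joining the appropriate endpoints yields $d_\infty(\delta_t(x),\delta_t(y)) = t\cdot d_\infty(x,y)$.

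The most delicate point is the ``only if'' direction of the equality in (a). Using the 2-step BCH formula, any horizontal lift $c$ starting at $\mathrm{id}$ admits the explicit description $c(T) = \exp\bigl(U(T) + V(T)\bigr)$ with $U(T) = \int_0^T X(s)\,ds = \pi(g)$ and vertical correction $V(T) = \tfrac{1}{2}\int_0^T [U(s),X(s)]\,ds$, an iterated commutator integral essentially measuring a signed area traced by $\pi\circ c$. One must then argue that $\mathrm{length}(c)=\|\pi(g)\|_\infty$ forces $V(T)=0$, so $g\in\exp(V_\infty)$. This is the technically subtle part, since minimizers in a normed space $(V_\infty,\|\cdot\|_\infty)$ whose unit ball is not strictly convex are highly non-unique, and ensuring that \emph{every} minimizing lift lands in $\exp(V_\infty)$ requires exploiting the specific bilinear structure of the commutator integral rather than appealing to naive uniqueness in the projected space.
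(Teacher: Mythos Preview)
The paper states this result as a Fact without proof, so there is no argument in the paper to compare against. Your proofs of the length equality in (a), the inequality $\|\pi(g)\|_\infty\le d_\infty(g)$, the ``if'' direction of the equality characterization, and part (b) are all correct and standard.

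You are right to flag the ``only if'' direction as the delicate point, but the issue is worse than you suggest: as stated, it is \emph{false} when $\|\cdot\|_\infty$ is not strictly convex. In the Heisenberg group $H_3(\R)$ with the $\ell^1$ norm on $V_\infty=\operatorname{span}(X,Y)$ (exactly the setting of Example~\ref{ex0}(1) in the paper), take the horizontal path that follows $\exp(sX)$ for $s\in[0,1]$ and then left-translates along $\exp(sY)$ for $s\in[0,1]$; in exponential coordinates this ends at $g=(1,1,\tfrac{1}{2})$ and has $d_\infty$-length $2$. Since $\|\pi(g)\|_1=\|(1,1)\|_1=2$, the inequality you already proved forces $d_\infty(g)=2=\|\pi(g)\|_1$, yet $g\notin\exp(V_\infty)=\{(x,y,0)\}$. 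The phenomenon you anticipated---non-uniqueness of minimizers in $(V_\infty,\|\cdot\|_\infty)$ allowing the commutator integral $V(T)$ to be nonzero along a projected geodesic---is precisely what produces this counterexample, so no amount of ``exploiting the bilinear structure'' will rescue the claim in full generality.

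Fortunately, a scan of the paper shows the ``only if'' clause is never invoked: the applications of Fact~\ref{fact20}(a) (in the Remark following it and in the proof of Lemma~\ref{prop1}) use only the length equality and the resulting inequality, while part (b) is used separately in Sublemma~\ref{sub91}. So the misstatement does not affect the paper's main results, and your argument already covers everything that is actually needed.
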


Notice that a general subFinsler metric,
such as $d$,
does not satisfies Fact \ref{fact20}.

\begin{rmk}
	\begin{itemize}
		\item By its definition,
			$\pi|_V$ sends $R$-balls in
$(V,\|\cdot\|)$ onto $R$-balls in $(V_{\infty},\|\cdot\|_{\infty})$.

\item By Fact \ref{fact20}(a),
$\pi$ sends $R$-balls in $(N,d_{\infty})$ onto $R$-balls in $(V_{\infty},\|\cdot\|_{\infty})$.
\item In Lemma \ref{prop1},
	we shall see that $\pi$ sends $R$-balls in $(N,d)$ onto $R$-balls in $(V_{\infty},\|\cdot\|_{\infty})$.
\end{itemize}
\end{rmk}

\section{Geodesics in $(N,d)$}\label{sec9}

Let $N$ be a simply connected $2$-step nilpotent Lie group,
and $d$ a left invariant subFinsler metric on $N$ determined by a subspace $V\subset \n$ and a norm $\|\cdot\|$ on $V$.
In this section,
we study geodesics in $(N,d)$.

For $g\in (N,d)$,
let $c$ be a geodesic from $id$ to $g$ with its length $t=d(g):=d(id,g)$.
Divide $c$ into $M$ pieces so that each lengths are $\frac{t}{M}$.
In other words,
$c$ is the concatenation of paths $c_i:[0,\frac{t}{M}]\to N$,
$i=1,\dots ,M$,
which are geodesics from $id$ to $h_i=c(\frac{i-1}{M}t)^{-1}c(\frac{i}{M}t)$.
Notice that $g=h_1\cdots h_{M}$.
Set
$$I(c,M,R)=\left\{i\in\{1,\dots M\}~|~\|\pi(h_i)\|_{\infty}< Rd(h_i)=R\frac{t}{M}\right\}$$
for $0<R\leq1$.
The goal of this section is to show the following proposition.

\begin{prop}\label{prop90}

There exists $K>0$ such that for any $M\in\N$,
if $t\geq M$ then
$$|I(c,M,R)|\leq \frac{K}{(1-R)^2}.$$

\end{prop}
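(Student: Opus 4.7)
The plan is to translate the inefficiency condition $i\in I$ into a lower bound on the central component of $h_i$, and then to aggregate these per-piece contributions against a global bound coming from the geodesicity $d(g)=t$.

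First I write each piece as $h_i=\exp(X_i+Y_i)$ with $X_i=\pi(h_i)\in V_\infty$ and $Y_i\in[\n,\n]$. Since $N$ is $2$-step, iterated Baker--Campbell--Hausdorff gives
\[
g=h_1\cdots h_M=\exp\Bigl(\textstyle\sum_i X_i+\sum_i Y_i+\tfrac12\sum_{i<j}[X_i,X_j]\Bigr),
\]
so $\pi(g)=\sum_i X_i$. Next I invoke the standard upper ball-box estimate for the $2$-step subFinsler metric $d$: there exist a norm on $[\n,\n]$ and a constant $C>0$ such that $d(\exp(X+Y))\leq\|X\|_\infty+C\|Y\|^{1/2}$. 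For each $i\in I$, combining this with $d(h_i)=t/M$ and $\|X_i\|_\infty<Rt/M$ forces
\[
\|Y_i\|^{1/2}\;\geq\;\frac{(1-R)\,t}{CM},
\]
so every inefficient piece has a definite central cost of order $(1-R)^2(t/M)^2$.

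I then close the argument by playing this accumulated cost against the global central budget. On one hand, the triangle inequality for $\|\cdot\|_\infty$ gives $\|\pi(g)\|_\infty\leq\sum_i\|X_i\|_\infty\leq t-(1-R)|I|(t/M)$. On the other hand, the ball-box upper bound applied to $g$ yields $t=d(g)\leq\|\pi(g)\|_\infty+C\|Y_\mathrm{tot}\|^{1/2}$, so $\|Y_\mathrm{tot}\|^{1/2}\geq (1-R)|I|(t/M)/C$; combined with the ball-box lower estimate $\|Y_\mathrm{tot}\|^{1/2}\leq t/c_1$ and the hypothesis $t\geq M$ (each piece is at scale $\geq 1$, above which the ball-box constants are uniform), this squeezes $|I|$ to the asserted size.

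The main obstacle is to extract the sharp $(1-R)^{-2}$ dependence rather than the weaker $M/(1-R)$ that the naive chain of inequalities above produces. Bridging this gap presumably requires (i) a careful analysis of the bracket correction $\tfrac12\sum_{i<j}[X_i,X_j]$ in $Y_\mathrm{tot}$, preventing it from cancelling the contribution of $\sum_i Y_i$, and (ii) a genuinely geometric use of $c$ being a geodesic --- most plausibly, a rearrangement or shortcut construction that turns the accumulated inefficiency into an explicit curve from $id$ to $g$ of length strictly less than $t$, contradicting the geodesicity of $c$. I expect the hypothesis $t\geq M$ to be essential precisely for this step, by keeping every piece at a scale where the ball-box constants do not degrade.
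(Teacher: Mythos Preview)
Your proposal is incomplete, as you yourself recognize: the chain of inequalities you set up only yields $|I|\lesssim M/(1-R)$, and you leave the decisive step as a conjecture. The missing idea is exactly the shortcut construction you allude to in (ii), and it resolves the difficulty cleanly once you see that the central discrepancy it produces involves only the $|I|$ bad pieces, not all $M$ of them. Concretely: for each $i\in I$ replace the segment $c_i$ by the straight segment $\tilde c_i(s)=\exp(sY_{h_i}/\|Y_{h_i}\|)$, where $Y_{h_i}\in V$ satisfies $\pi(Y_{h_i})=\pi(h_i)$ and $d(\exp Y_{h_i})=\|Y_{h_i}\|=\|\pi(h_i)\|_\infty$ (such a $Y_{h_i}$ exists by Lemma~\ref{prop1}); for $i\notin I$ keep $\tilde c_i=c_i$. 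The concatenation $\tilde c$ ends at some $\tilde g$ with
\[
d(g)-d(\tilde g)\;\geq\;\sum_{i\in I}\Bigl(\tfrac{t}{M}-\|\pi(h_i)\|_\infty\Bigr)\;\geq\;\tfrac{t}{M}(1-R)\,|I|.
\]
Because each $\exp(Y_{h_i})^{-1}h_i$ is central, the error $h=\tilde g^{-1}g=\prod_{i\in I}\exp(Y_{h_i})^{-1}h_i$ lies in $[N,N]$, and by Lemmas~\ref{lem90}--\ref{lem91} its norm satisfies $\|h\|_{[N,N]}\leq |I|\cdot K_1\bigl(K_2\tfrac{t}{M}+K_2\bigr)^2\leq 4K_1K_2^2\,|I|\,(t/M)^2$ (here $t\geq M$ is used). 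Writing $h$ as a single commutator $[\exp(\sqrt r X),\exp(\sqrt r Y)]$ with $X,Y\in\partial B_{\|\cdot\|_\infty}(1)$ and $\|[X,Y]\|_{[N,N]}\geq L$ (Lemma~\ref{lem89}) gives $d(h)\leq 4\sqrt r\leq 4\sqrt{\|h\|_{[N,N]}/L}$. The triangle inequality $d(g)-d(\tilde g)\leq d(h)$ then reads
\[
\tfrac{t}{M}(1-R)\,|I|\;\leq\;8K_2\,\tfrac{t}{M}\sqrt{K_1|I|/L},
\]
and dividing by $\tfrac{t}{M}\sqrt{|I|}$ gives $|I|\leq 64K_1K_2^2\,L^{-1}(1-R)^{-2}$.

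The reason your approach stalls at $M/(1-R)$ is that you compare against the global central component $Y_{\mathrm{tot}}$ of $g$, whose norm can legitimately be of order $t^2$; the bracket terms $\tfrac12\sum_{i<j}[X_i,X_j]$ you worry about in (i) are genuinely uncontrolled. The shortcut construction sidesteps this entirely: since $\tilde c_i=c_i$ for $i\notin I$, those pieces contribute nothing to $h$, and the bracket cross-terms never appear because each factor $\exp(Y_{h_i})^{-1}h_i$ is already central. This is what converts the right-hand side from $O(t)$ to $O(\sqrt{|I|}\cdot t/M)$ and produces the quadratic inequality in $\sqrt{|I|}$.
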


\begin{ex}[The $3$-Heisenberg Lie group with a subFinsler metric]\label{ex0}
	The $3$-Heisenberg Lie group $H_3(\R)$ is the $2$-step nilpotent Lie group diffeomorphic to $\R^3$ equipped with a group operation
$$(x_1,y_1,z_1)\cdot(x_2,y_2,z_2)=(x_1+x_2,y_1+y_2,z_1+z_2+\frac{x_1y_2-x_2y_1}{2}).$$
The associated Lie algebra $\h_3$ is spanned by three vectors $\{X,Y,Z\}$ such that $[X,Y]=Z$,
and its derived Lie algebra is $[\h_3,\h_3]=Span(Z)$.
Then $V_{\infty}=\langle X,Y\rangle\subset \h_3$ and we can identify it to the plane $\{(x,y,0)\}\subset H_3$ via the exponential map.

\vspace{10pt}
(1)Let $\|\cdot\|_1$ be the $l^1$ norm on a vector subspace $V_{\infty}$,
and $d_1$ the induced left invariant subFinsler metric on $(H_3,V_{\infty},\|\cdot\|_1)$.

The shape of geodesics in $(H_3,d_1)$ is given in \cite{duc}.
For example,
a geodesic $c$ from $(0,0,0)$ to $(0,0,\frac{t^2}{16})$ is the concatenation of $4$ linear paths as in Figure \ref{fig1}.
Here we say a curve is linear if it is represented by $c(t)=\exp(tX)$ for $X\in V_{\infty}$.
We can catch precise shape of geodesics by projecting the curve to the plane $\{(x,y,0)\}$.
As in Figure \ref{fig2},
it starts and ends at $(0,0)$ forming the square.

Divide $c$ into $4$-pieces and denote them by $c_i$ $(i=1,2,3,4)$.
Then $c_i$'s are the linear paths.
It is easy to see that $length(c_i)=length(\pi\circ c_i)=\frac{t}{4}$ for all $i$.
Hence $I(c,4,1)=0$ independent of $t$.

\begin{figure}[h]
\begin{subfigure}{0.5\textwidth}
    \def\svgwidth{0.9\columnwidth}
    \vspace{-2pt}\includegraphics[width=6cm]{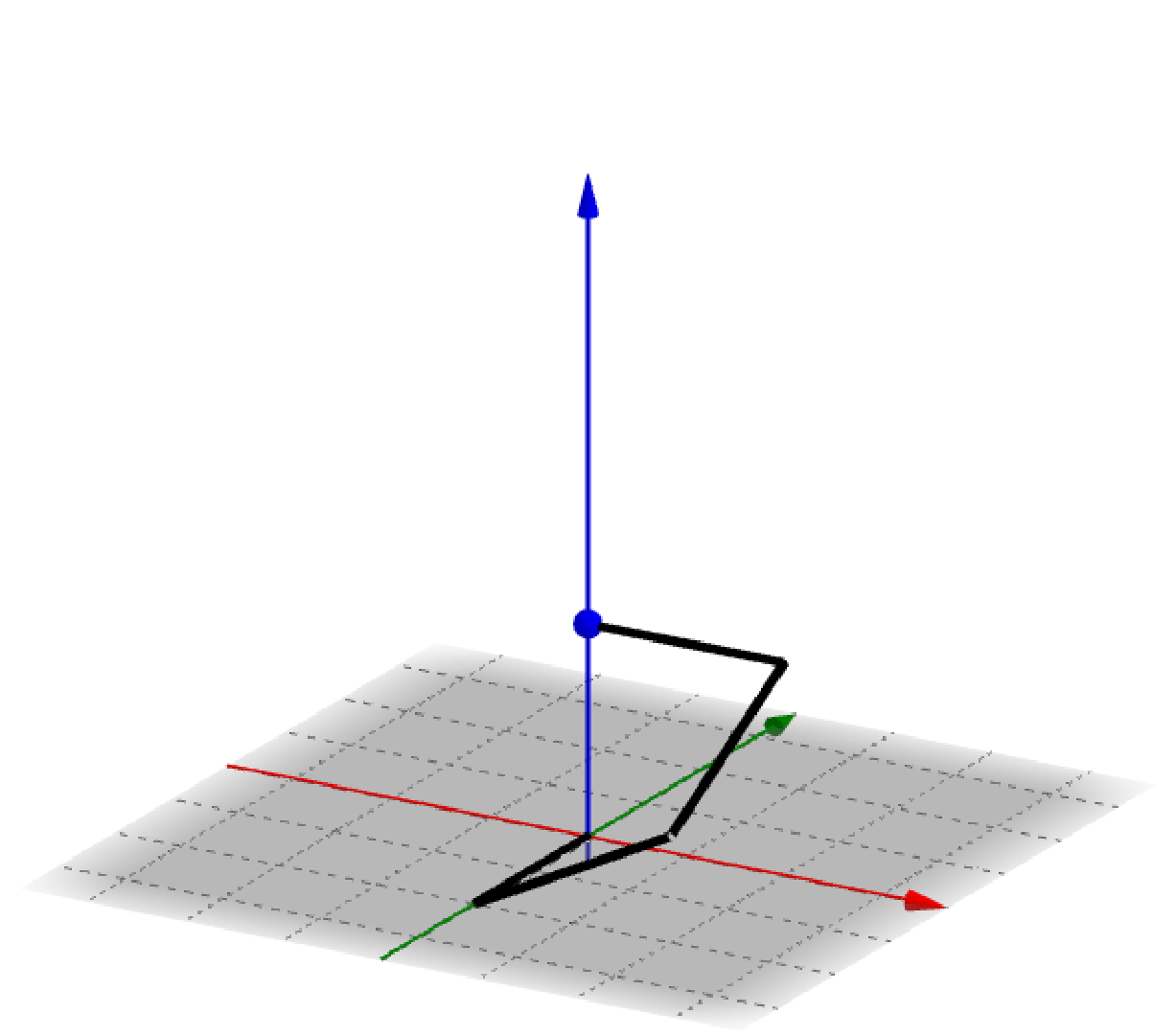}
    \caption{$(H_3,d_1)$}
    \label{fig1}
\end{subfigure}
\begin{subfigure}{0.5\textwidth}
    \def\svgwidth{0.9\columnwidth}
    \centering\hspace{1cm}\includegraphics[width=5.5cm]{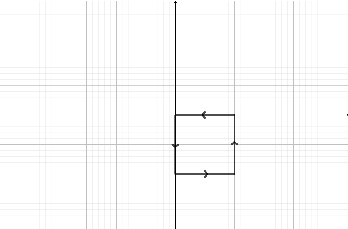}
    \caption{$(W,\|\cdot\|_1)$}
    \label{fig2}
\end{subfigure}
\caption{}
\end{figure}

\vspace{10pt}
(2)Let $\|\cdot\|_2$ be the $l^2$ norm on $V_{\infty}$,
and $d_2$ the induced subFinsler (subRiemannian) metric on $(H_3,V_{\infty},\|\cdot\|_2)$.
A geodesic $c$ from $(0,0,0)$ to $(0,0,\frac{t^2}{4\pi})$ is given as in Figure \ref{fig3}.
If the geodesic is projected to $\left\{(x,y,0)\right\}$ by $\pi$,
then the projected path starts and ends at $(0,0)$ rounding the circle of radius $\frac{t}{2\pi}$.
This curve is not a concatenation of linear paths,
however Proposition \ref{prop90} holds.

Notice that the length of $c$ is $t$,
which is the circumference of the projected circle in $V_{\infty}$.
As in Figure \ref{fig4},
divide $c$ into $4$ pieces,
and denote them by $c_i$ $(i=1,2,3,4)$.
Each arc $c_i$'s have length $\frac{t}{4}$.
On the other hand,
each chords in Figure \ref{fig4} is a geodesic in $(W,\|\cdot\|_2)$ whose length is $2\frac{t}{2\pi}\sin(\frac{\pi}{4})=\frac{t}{\pi\sqrt{2}}$.
Hence $h_i$'s,
the endpoints of $c_i$'s,
satisfy
$$\|\pi(h_i)\|_2=\frac{t}{\pi\sqrt{2}}.$$
It means that $I(c,4,R)=0$ for $R\leq \frac{2\sqrt{2}}{\pi}$.

\begin{figure}[h]
\begin{subfigure}{0.5\textwidth}
    \def\svgwidth{0.9\columnwidth}
    \vspace{4pt}\includegraphics[width=7cm]{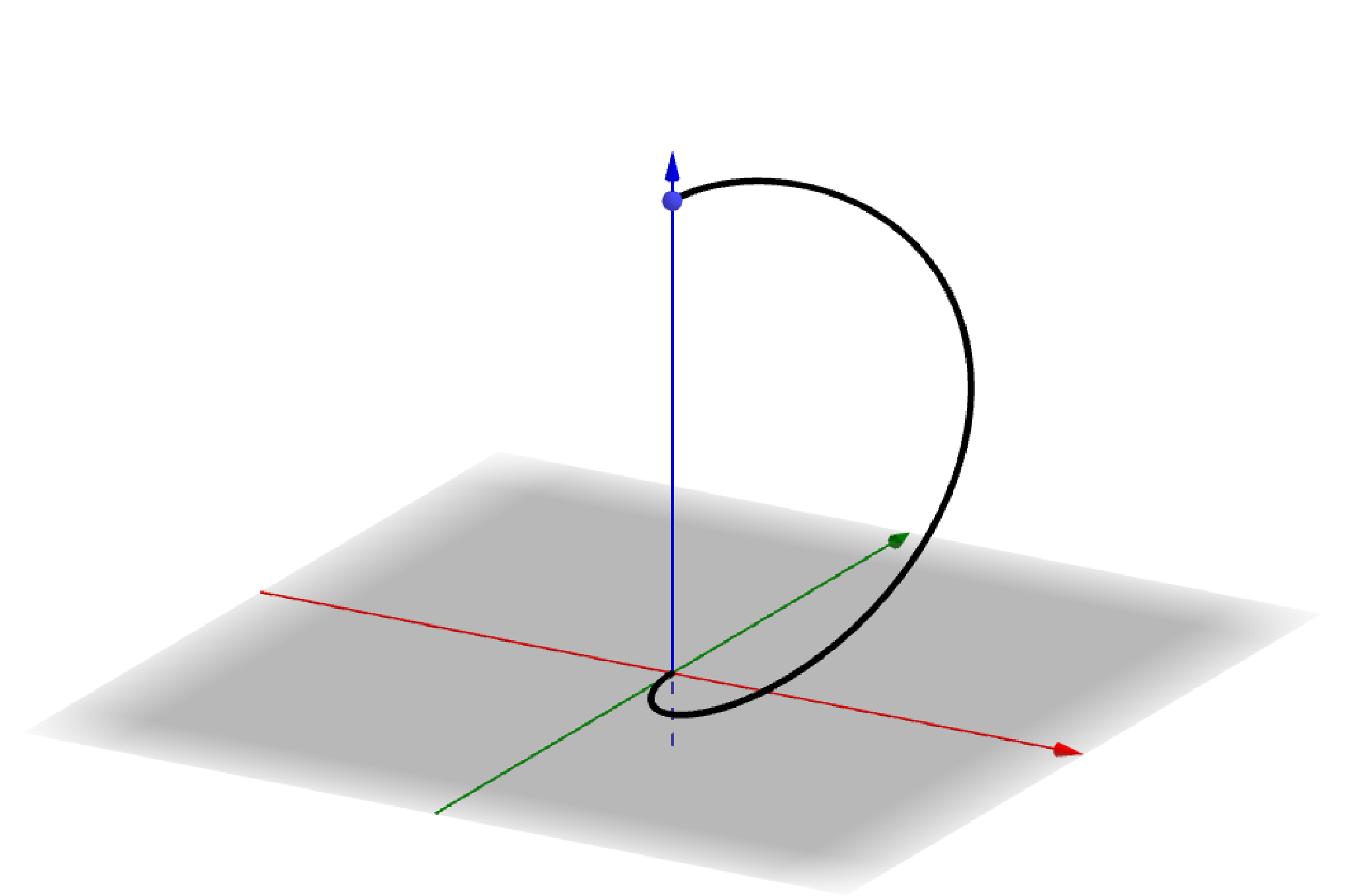}
    \caption{$(H_3,d_2)$}
    \label{fig3}
\end{subfigure}
\begin{subfigure}{0.5\textwidth}
    \def\svgwidth{0.9\columnwidth}
    \centering\hspace{1cm}\includegraphics[width=5cm]{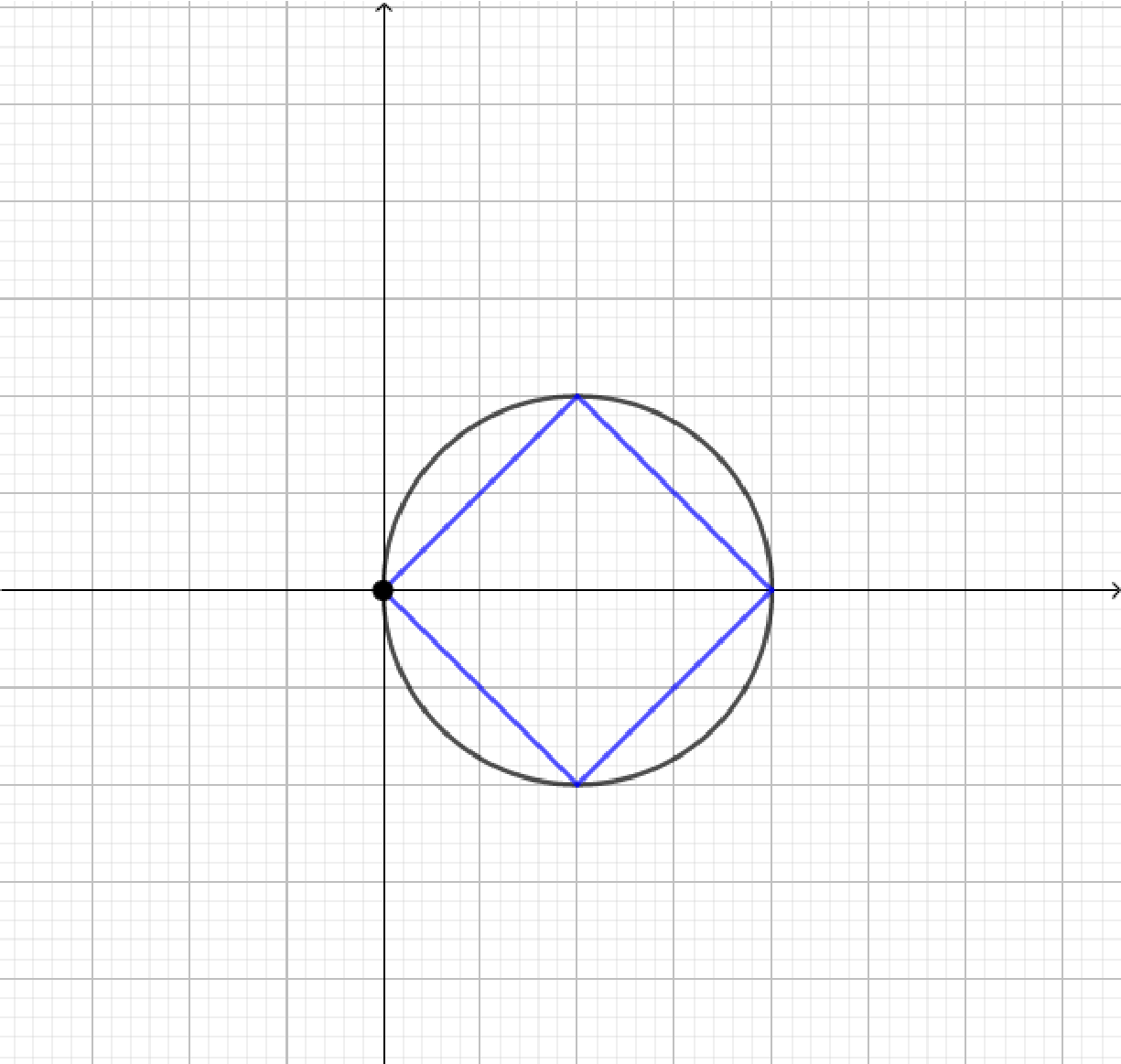}
    \caption{$(W,\|\cdot\|_2)$}
    \label{fig4}
\end{subfigure}
\caption{}
\end{figure}

\end{ex}

\vspace{20pt}
We start to prove easy lemmas.
Fix a norm $\|\cdot\|_{[N,N]}$ on $[N,N]$.

\begin{lem}\label{lem90}
There exists $K_1>0$ such that for any $r\geq1$,
$$\sup\{\|g^{-1}h\|_{[N,N]}~|~g,h\in B_{d_{\infty}}(r),g^{-1}h\in[N,N]\}=K_1r^2.$$
\end{lem}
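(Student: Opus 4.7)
The plan is to reduce the bound to the case $r=1$ using the dilation structure from Section \ref{sec21}, then translate the condition $g^{-1}h\in[N,N]$ via the Baker--Campbell--Hausdorff (BCH) formula into equality of the $V_{\infty}$-components, and finally invoke properness of $d_{\infty}$.

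First recall from Fact \ref{fact20}(b) that the dilation $\delta_r$ is a $d_{\infty}$-scaling by $r$, so $B_{d_{\infty}}(r)=\delta_r(B_{d_{\infty}}(1))$. On the Lie algebra side, $\delta_r$ acts as $X+Y\mapsto rX+r^{2}Y$ for $X\in V_{\infty}$ and $Y\in[\n,\n]$. Writing arbitrary elements as $g=\exp(X_g+Y_g)$ and $h=\exp(X_h+Y_h)$ with $X_\cdot\in V_{\infty}$, $Y_\cdot\in[\n,\n]$, the BCH formula in a $2$-step algebra collapses (every bracket hitting a $Y$-component vanishes) to
$$g^{-1}h=\exp\!\Bigl((X_h-X_g)+(Y_h-Y_g)-\tfrac{1}{2}[X_g,X_h]\Bigr).$$
The condition $g^{-1}h\in[N,N]$ thus forces $X_g=X_h$, in which case $[X_g,X_h]=0$ automatically, and one obtains $g^{-1}h=\exp(Y_h-Y_g)$ and hence $\|g^{-1}h\|_{[N,N]}=\|Y_h-Y_g\|_{[N,N]}$.

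Consequently the quantity to be bounded is
$$f(r):=\sup\bigl\{\|Y_h-Y_g\|_{[N,N]}\,\bigm|\,g,h\in B_{d_{\infty}}(r),\ X_g=X_h\bigr\}.$$
Substituting $g=\delta_r(g')$, $h=\delta_r(h')$ with $g',h'\in B_{d_{\infty}}(1)$ multiplies both $Y$-components by $r^{2}$ and preserves the constraint $X_{g'}=X_{h'}$, so $f(r)=r^{2}f(1)$. Taking $K_1:=f(1)$ yields the claimed equality, provided $f(1)<\infty$.

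Finiteness of $f(1)$ is where the metric geometry enters. Since $V_{\infty}$ is bracket generating (Section \ref{sec22}), Chow's theorem makes $d_{\infty}$ a genuine left invariant subFinsler metric, and it is proper, so $\overline{B_{d_{\infty}}(1)}$ is compact. The map $g\mapsto Y_g$, obtained by composing $\log$ with projection onto $[\n,\n]$ along $V_{\infty}$, is continuous, hence bounded on this compact set by some $K_0$; therefore $f(1)\leq 2K_0$, and we may set $K_1:=f(1)\leq 2K_0$. There is no substantive obstacle; the only care required is the BCH collapse and the invocation of properness of $d_{\infty}$.
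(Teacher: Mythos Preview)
Your proof is correct and follows essentially the same route as the paper: both arguments use BCH in the $2$-step algebra to convert $g^{-1}h\in[N,N]$ into equality of the $V_{\infty}$-components and obtain $g^{-1}h=\exp(Y_h-Y_g)$, then apply the dilation $\delta_r$ to reduce to $r=1$ and pick up the factor $r^{2}$ on the $[\n,\n]$-component. Your justification of finiteness of $f(1)$ via properness of $d_{\infty}$ and continuity of $g\mapsto Y_g$ is in fact slightly more explicit than the paper, which simply asserts $K_1<\infty$.
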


\begin{proof}

Take $g,h\in B_{d_{\infty}}(r)$ so that $g^{-1}h\in[N,N]$.
By Fact \ref{fact20}(b),
$g'=\delta_{\frac{1}{r}}(g)$ and $h'=\delta_{\frac{1}{r}}(h)$ are in $B_{d_{\infty}}(1)$.

Set $X=X_1+X_2=\log(g)$ and $Y=X_1+Y_2=\log(h)$,
where $X_1\in V_{\infty}$ and $X_2,
Y_2 \in[\n,\n]$.
Here we can take the common $X_1$ since $g^{-1}h\in[N,N]$.
By the definition of $\delta_{\frac{1}{r}}$,
$\log(g')=\frac{1}{r}X_1+\frac{1}{r^2}X_2$ and
$\log(h')=\frac{1}{r}X_1+\frac{1}{r^2}Y_2.$

Then
\begin{align*}
g^{-1}h&=\exp(-X_1-X_2)\exp(X_1+Y_2)\\
&=\exp(-X_2+Y_2)\\
&=\exp(r^2\frac{1}{r^2}(-X_2+Y_2))\\
&=\exp(\frac{1}{r^2}(-X_2+Y_2))^{r^2}\\
&=(g^{\prime -1}h')^{r^2}.
\end{align*}
We obtain the desired equality
\begin{align*}
&\sup\left\{\|g^{-1}h\|_{[N,N]}~|g,h\in B_{d_{\infty}}(r),g^{-1}h\in[N,N]\right\}\\
&=r^2\sup\{\|x^{-1}y\|_{[N,N]}~|x,y\in B_{d_{\infty}}(1),x^{-1}y\in[N,N]\}\\
&=K_1r^2,
\end{align*}
where $K_1=\sup\{\|x^{-1}y\|_{[N,N]}~|x,y\in B_{d_{\infty}}(1),x^{-1}y\in[N,N]\}<\infty$.
\end{proof}

\begin{lem}\label{lem89}
	For any $h\in[N.N]\setminus\{id\}$,
	there are $X,Y\in\partial B_{\|\cdot\|_{\infty}}(1)$ such that
	\begin{equation}\label{eq7}
		[\exp(X),\exp(Y)]\in h^{\R_{>0}}=\exp(\R_{>0}\log(h)).
	\end{equation}
	Moreover,
assume that $\|[\exp(X),\exp(Y)]\|_{[N,N]}$ is maximal within the condition (\ref{eq7}).
	Then there is $L=L(d_{\infty},\|\cdot\|_{[N,N]})>0$,
	independent of $h$,
	such that  $\|[\exp(X),\exp(Y)]\|_{[N,N]}\geq L$
\end{lem}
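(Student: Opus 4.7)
The plan is to use the Lie-algebraic reformulation of non-singularity, which is the running hypothesis on $N$ in this part of the paper. Since $[\n,\n]$ is central in a $2$-step nilpotent Lie algebra, brackets of two elements of $\n$ depend only on their $V_{\infty}$-components, so non-singularity of $N$ is equivalent to the following linear statement: for every nonzero $X\in V_{\infty}$, the map $\operatorname{ad}_X\colon V_{\infty}\to[\n,\n]$ is surjective. This is the only nontrivial input I would use; everything else is finite-dimensional linear algebra combined with the bilinearity of the bracket.

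For the existence part of the lemma, I would pick any $X_0\in \partial B_{\|\cdot\|_{\infty}}(1)$, invoke the surjectivity above to find $Y_0\in V_{\infty}$ with $[X_0,Y_0]=\log h$, and rescale $Y_0$ to unit length. The pair $(X_0,\,Y_0/\|Y_0\|_{\infty})$ then lies in $\partial B_{\|\cdot\|_{\infty}}(1)\times \partial B_{\|\cdot\|_{\infty}}(1)$, and its bracket is a positive multiple of $\log h$, so its exponential lies in $h^{\R_{>0}}$, which is (\ref{eq7}).

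For the uniform lower bound, the key observation is that the construction above can be carried out with the same $X_0$ for every $h$. I would fix $X_0\in \partial B_{\|\cdot\|_{\infty}}(1)$ once and for all, choose a linear complement $W$ of $\ker(\operatorname{ad}_{X_0})$ inside $V_{\infty}$, and note that $\operatorname{ad}_{X_0}\colon W\to[\n,\n]$ is a linear isomorphism between finite-dimensional normed spaces; hence its inverse has a finite operator norm $C$ depending only on $X_0$, $\|\cdot\|_{\infty}$ and $\|\cdot\|_{[N,N]}$. Applying this bound to the unique $Y_h\in W$ with $[X_0,Y_h]=\log h$ yields $\|Y_h\|_{\infty}\le C\,\|\log h\|_{[N,N]}$, and rescaling $Y_h$ to unit length produces a valid pair realising (\ref{eq7}) with $[X_0,Y_h/\|Y_h\|_{\infty}]$ of norm at least $1/C$ in $\|\cdot\|_{[N,N]}$. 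Since the maximum in the statement is at least this value, $L:=1/C$ works, and depends only on $d_{\infty}$ (through the fixed $X_0$ and $\|\cdot\|_{\infty}$) and on $\|\cdot\|_{[N,N]}$, as required.

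The only delicate point I expect is the translation from group-level non-singularity to the surjectivity of $\operatorname{ad}_{X_0}$ on $V_{\infty}$; once that is in place, the argument reduces to boundedness of the inverse of a linear isomorphism between finite-dimensional normed spaces, which is automatic.
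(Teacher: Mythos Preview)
Your argument is correct, and it is somewhat different from the paper's. For existence, the paper simply asserts that any $h\in[N,N]\setminus\{id\}$ can be written as a single commutator $h=[x,y]$, then projects $x,y$ to $V_\infty$ and normalises; for the uniform lower bound it gives a one--line appeal to the bracket map $[\cdot,\cdot]\colon \exp(V_\infty)\times\exp(V_\infty)\to[N,N]$ being a submersion, which amounts to a compactness argument. Your route is more explicit and more elementary: you fix a single $X_0$ once and for all and reduce everything to the boundedness of the inverse of the linear isomorphism $\operatorname{ad}_{X_0}\!\restriction_W$. This has the advantage of giving a concrete constant $L=1/C$ and of avoiding any smoothness issues with $\partial B_{\|\cdot\|_\infty}(1)$, which need not be a manifold for a general norm.

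One caveat about your framing. You say non-singularity is ``the running hypothesis in this part of the paper,'' but in fact the paper places this lemma in Section~\ref{sec9} and states at the beginning of the next section that the arguments there do not need non-singularity. That said, your instinct to invoke it is sound: without some such hypothesis the first assertion of the lemma can fail. For instance, in the free $2$-step nilpotent Lie algebra on four generators one has $[\n,\n]\cong\bigwedge^2\R^4$, and an element such as $e_1\wedge e_2+e_3\wedge e_4$ lies on no ray through a decomposable $2$-vector $X\wedge Y$, so no pair $X,Y\in\partial B_{\|\cdot\|_\infty}(1)$ satisfies \eqref{eq7} for the corresponding $h$. The paper's proof glosses over this point (the sentence ``there are $x,y\in N$ such that $h=[x,y]$'' is exactly where it enters), and its submersion claim likewise implicitly uses that $\operatorname{ad}_X$ is onto $[\n,\n]$ for $X\neq 0$. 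So while your opening sentence mislocates the hypothesis, your proof is both correct and, arguably, more honest about what is actually being used.
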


\begin{proof}
	Since $h\in[N,N]\setminus\left\{id\right\}$,
there are $x,y\in N$ such that $h=[x,y]$.
Set $X_1,Y_1\in V_{\infty}$ and $X_2,Y_2\in[\n,\n]$ such that
$X_1+X_2=\log(x)$ and $Y_1+Y_2=\log(y)$.
Since $X_2,Y_2$ are in the center of $\n$,
$$[\pi(x),\pi(y)]=[X_1,Y_1]=[X_1+X_2,Y_1+Y_2]=\log(h).$$
Set $X'=\frac{1}{\|X_1\|_{\infty}}X_1$ and $Y'=\frac{1}{\|Y_1\|_{\infty}}Y_1$,
It is clear that these $X',Y'$ are in $\partial B_{\|\cdot\|_{\infty}}(1)$ and satisfy $[\exp(X'),\exp(Y')]\in h^{\R_{>0}}$.
It completes the former part of this lemma.

The latter part is trivial since the restricton of the commutator $[\cdot,\cdot]$ to $\exp(V_{\infty})\times \exp(V_{\infty})\subset N\times N$ is a submersion.
\end{proof}

Next we study a length preserving translation of a element in $(V_{\infty},\|\cdot\|_{\infty})$ to $(V,\|\cdot\|)$ and vice versa.
\begin{lem}\label{prop1}

For any $g\in N$,
there exists $Y_g\in\pi|_V^{-1}(\pi(g))$ such that

\begin{itemize}
	\item $\|Y_g\|=\|\pi(g)\|_{\infty}=d(\exp(Y_g))=\inf\{d(h)|h\in\pi^{-1}(\pi(g))\},$
\item An infinite path $c:\R_{\geq 0}\to N$,
$t\mapsto\exp\left(t\frac{Y_g}{\|Y_g\|}\right)$ is a geodesic ray i.e. for any $t_1,t_2\in\R_{\geq 0}$,
$d(c(t_1),c(t_2))=|c_1-c_2|$.
\end{itemize}
\end{lem}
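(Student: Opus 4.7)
The plan is to obtain $Y_g$ by solving a quotient-norm minimization on $V$, then verify the metric claims via a length comparison between horizontal curves in $(N,d)$ and their $\pi$-projections to $(V_\infty,\|\cdot\|_\infty)$.

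First, I would note that $\pi|_V:V\to V_\infty$ is linear and surjective: linearity is clear, and bracket generation of $V$ in the $2$-step algebra $\n$ gives $V+[\n,\n]=\n$, so $\pi(V)=V_\infty$ since $\pi$ vanishes on $[\n,\n]$ and is the identity on $V_\infty$. The defining property that the unit ball of $\|\cdot\|_\infty$ is $\pi(B_{\|\cdot\|}(1))$ says exactly that $\|\cdot\|_\infty$ is the quotient norm
\[
\|v\|_\infty=\inf\{\|w\|:w\in V,\ \pi(w)=v\}\qquad(v\in V_\infty),
\]
and in finite dimensions this infimum is attained. Choosing a minimizer for $v=\pi(g)$ and calling it $Y_g$ yields $\pi(Y_g)=\pi(g)$ and $\|Y_g\|=\|\pi(g)\|_\infty$.

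Next I would establish the comparison $\|\pi(h)\|_\infty\le d(h)$ for every $h\in N$. Viewing $\pi:N\to V_\infty$ as a Lie group homomorphism into an abelian target, its differential at the identity is $\pi|_\n$, and the identity $\pi\circ L_a=L_{\pi(a)}\circ\pi$ differentiates to $d\pi|_a\circ(L_a)_{\ast}=(L_{\pi(a)})_{\ast}\circ\pi|_\n$. Hence for a horizontal curve $\gamma$ in $(N,d)$ with $\dot\gamma(\tau)=(L_{\gamma(\tau)})_{\ast}u(\tau)$, $u(\tau)\in V$, one obtains $(\pi\circ\gamma)'(\tau)=(L_{\pi(\gamma(\tau))})_{\ast}\pi|_V(u(\tau))$, and since a Finsler norm on the abelian group $V_\infty$ is translation invariant,
\[
\mathrm{length}(\pi\circ\gamma)=\int\|\pi|_V(u(\tau))\|_\infty\,d\tau\le\int\|u(\tau)\|\,d\tau=\mathrm{length}(\gamma),
\]
using $\|\pi|_V(w)\|_\infty\le\|w\|$ for all $w\in V$ from the construction of $\|\cdot\|_\infty$. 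Taking $\gamma$ to be a near-minimizing curve from $id$ to $h$ yields $\|\pi(h)\|_\infty\le d(h)$.

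With these two ingredients the bullets follow. The one-parameter curve $\tau\mapsto\exp(\tau Y_g)$, $\tau\in[0,1]$, is horizontal in $(N,d)$ of length $\|Y_g\|$ ending at $\exp(Y_g)$, so $d(\exp(Y_g))\le\|Y_g\|$; the comparison applied to $\exp(Y_g)$ gives the reverse inequality, and applied to an arbitrary $h\in\pi^{-1}(\pi(g))$ shows this common value is the infimum. For the ray claim, left invariance reduces $d(c(t_1),c(t_2))$ to $d(\exp((t_2-t_1)Y_g/\|Y_g\|))$, and by homogeneity of the quotient norm the rescaled vector $(t_2-t_1)Y_g/\|Y_g\|$ is itself a minimizer for the corresponding element of $V_\infty$, so the first bullet outputs the value $|t_2-t_1|$. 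I expect the only mildly delicate step to be the differential computation underlying the length comparison; once $\pi$ is recognized as a group homomorphism and the target identified with its own Lie algebra, everything else is quotient-norm bookkeeping together with left invariance.
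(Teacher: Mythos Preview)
Your proof is correct and follows essentially the same strategy as the paper: pick $Y_g$ as a quotient-norm minimizer in $V$, bound $d(\exp(Y_g))$ above by $\|Y_g\|$ via the one-parameter subgroup, and bound $d(h)$ below by $\|\pi(h)\|_\infty$ via a length comparison under $\pi$. The only cosmetic difference is that the paper routes the length comparison through an intermediate horizontal curve $c_2$ in $(N,d_\infty)$ and then invokes Fact~\ref{fact20}(a), whereas you project directly to $(V_\infty,\|\cdot\|_\infty)$ using the homomorphism property of $\pi$ and the quotient-norm inequality $\|\pi|_V(w)\|_\infty\le\|w\|$; your version is slightly more streamlined but the content is the same.
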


\begin{proof}

	From the construction of the asymptotic cone of $(N,d,id)$,\\
	$\pi|_V(B_{\|\cdot\|}(R))=B_{\|\cdot\|_{\infty}}(R)$ for any $R>0$.
Thus for any $g\in N$,
we can take $Y_g$ in $V$ such that $\|Y_g\|=\|\pi(g)\|_{\infty}$.

\vspace{12pt}
We shall see that this $Y_g$ is the desired one.
Clearly $\|Y_g\|\geq d\left(\exp(Y_g)\right)$ since the curve $c:[0,\|Y_g\|]\to N$,
$c(t)=\exp\left(t\frac{Y_g}{\|Y_g\|}\right)$ is a horizontal path from $id$ to $\exp(Y_g)$ such that $length(c)=\|Y_g\|$.

We claim the converse by showing the inequality
\begin{equation}\label{eq32}
	\|\pi(g)\|_{\infty}\leq d\left(\exp(Y_g)\right).
\end{equation}
Let $c_1:\left[0,d\left(\exp(Y_g)\right)\right]\to N$ be a geodesic from $id$ to $\exp(Y_g)$ in $(N,d)$.
Then we obtain the horizontal path $c_2$ in $(N,d_{\infty})$ by letting the derivative $c_2^{\prime}(t)=\pi(c_1^{\prime}(t))$ for each $t\in[0,d\left(\exp(Y_g)\right)]$.
Since $\pi$ is distance non-increasing,
$length(c_2)\leq length(c_1)$.
By using Fact \ref{fact20}(a),
$\pi\circ c_2$ is a path in $V_{\infty}$ from $id$ to $\pi(Y_g)=\pi(g)$ whose length equals that of $c_2$.
Now we have constructed the path $\pi\circ c_2$ in $(V_{\infty},\|\cdot\|_{\infty})$ from $id$ to $\pi(g)$ whose length is shorter than $length(c_1)$,
which yields the inequality (\ref{eq32}).

The construction of $\pi\circ c_2$ from $c_1$ is applied to any $h\in \pi^{-1}(\pi(g))$ and any geodesic $c_1$ from $id$ to $h$.
Hence the inequality $d(h)\geq \|\pi(g)\|_{\infty}$ holds.
This argument yields the last part of the equality.

\vspace{12pt}
The second part of this lemma follows in the same way.
The above arguments imply that $c:[0,d(\exp(Y_g))]\to N$,
$c(t)=\exp\left(t\frac{Y_g}{\|Y_g\|}\right)$ is a geodesic from $id$ to $\exp(Y_g)$.
By the choice of $Y_g$,
we can show the second part of this lemma if $\|tY_g\|=\|t\pi(g)\|_{\infty}$ for $t\in\R_{\geq 0}$.
It is trivial since the mapping $\pi$ is a linear homomorphism.

\end{proof}

\begin{lem}[Proposition 2.13 in \cite{bre2}]\label{lem91}

There is $K_2>0$ such that for any $g\in N$,
$$\frac{1}{K_2}d(g)-K_2\leq d_{\infty}(g)\leq K_2d(g)+K_2.$$
\end{lem}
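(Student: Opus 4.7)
The plan is to deduce both halves of the inequality from the convergence-of-ratios statement in Theorem~\ref{thm210}, together with a compactness argument that absorbs the behaviour near the identity into an additive constant.

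First, I upgrade the sequential form of Theorem~\ref{thm210} to a uniform one: for every $\varepsilon>0$ there exists $R=R(\varepsilon)>0$ such that every $g\in N$ with $d(g)\geq R$ satisfies $|d_{\infty}(g)/d(g)-1|<\varepsilon$. Were this to fail, one could extract a sequence $\{g_i\}$ with $d(g_i)\to\infty$ whose ratio $d_{\infty}(g_i)/d(g_i)$ stayed at definite distance from $1$, directly contradicting the theorem. Taking $\varepsilon=1/2$ then gives
$$\tfrac{1}{2}d(g)\leq d_{\infty}(g)\leq 2d(g)\qquad\text{whenever }d(g)\geq R.$$

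Second, I handle the regime $d(g)<R$ by compactness. Since $V$ is bracket generating, Chow's theorem makes $d$ finite and ensures it induces the manifold topology on $N$; standard facts on left-invariant subFinsler metrics on simply connected nilpotent Lie groups then make $d$ proper. Hence $\overline{B_d(R)}$ is compact, and because $d_{\infty}$ is also continuous in the manifold topology, the quantity $M:=\max_{g\in\overline{B_d(R)}}d_{\infty}(g)$ is finite. Combining this with the first step yields
$$\tfrac{1}{2}d(g)-\tfrac{R}{2}\leq d_{\infty}(g)\leq 2d(g)+M\qquad\text{for all }g\in N,$$
and the lemma follows on setting $K_2=\max\{2,M,R/2\}$.

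The only step requiring real thought is the uniformization in the first paragraph; but once one accepts that as a standard extraction-of-a-bad-sequence argument, the rest is routine. An alternative approach would try to bypass Theorem~\ref{thm210} by constructing, for a given $d$-geodesic from $id$ to $g$, a comparable $d_{\infty}$-path via Lemma~\ref{prop1}; however, since $\pi$ changes the endpoint, this route would require extra cancellation in the $[N,N]$-direction and seems strictly more work than simply invoking the convergence theorem.
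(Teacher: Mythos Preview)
Your argument is correct. The paper does not supply its own proof of this lemma; it simply records it as Proposition~2.13 of \cite{bre2} and moves on. Your route---upgrading the sequential ratio convergence of Theorem~\ref{thm210} to a uniform $\varepsilon$--$R$ statement by contradiction, and then absorbing the bounded region $\overline{B_d(R)}$ into an additive constant via properness of $d$ and continuity of $d_{\infty}$---is the natural way to extract a coarse bilipschitz estimate from the asymptotic-ratio theorem, and all steps are sound. The properness of $d$ and $d_{\infty}$ that you invoke is used elsewhere in the paper without comment (e.g.\ in the proof of Theorem~\ref{thmneo}), so you are not assuming anything the paper does not.
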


Now we pass to the proof of Proposition \ref{prop90}.
\begin{proof}[Proof of Proposition \ref{prop90}]
Fix $M\in\N$ and $0<R\leq 1$.
Let $c$ be a geodesic from $id$ to $g\in N$ with $length(c)=t\geq M$.
We consider an upper bound of the cardinality of $I=I(c,M,R)$.
Divide $c$ into $M$ pieces,
and denote each by $c_i$.
Let $h_i$ be the endpoint of $c_i$,
that is,
$h_i=c\left(\frac{t}{M}(i-1)\right)^{-1}c\left(\frac{t}{M}i\right)$.
Deform $c$ and $c_i$ as follows.
\begin{itemize}
	\item[(1)]If $i\in I$,
set $\tilde{c}_i:[0,\|Y_{h_i}\|]\to N$,
$$\tilde{c}_i(t)=\frac{Y_{h_i}}{\|Y_{h_i}\|}t,$$
where $Y_{h_i}$ are given as in Lemma \ref{prop1}

\item[(2)]If $i\notin I$,
set $\tilde{c}_i=c_i$.

\item[(3)]Set $\tilde{c}$ to be the concatenation of $\tilde{c}_i$'s starting at the identity.
\end{itemize}

\begin{figure}[h]
\begin{subfigure}{0.5\textwidth}
    \def\svgwidth{0.9\columnwidth}
    \includegraphics[width=6.5cm]{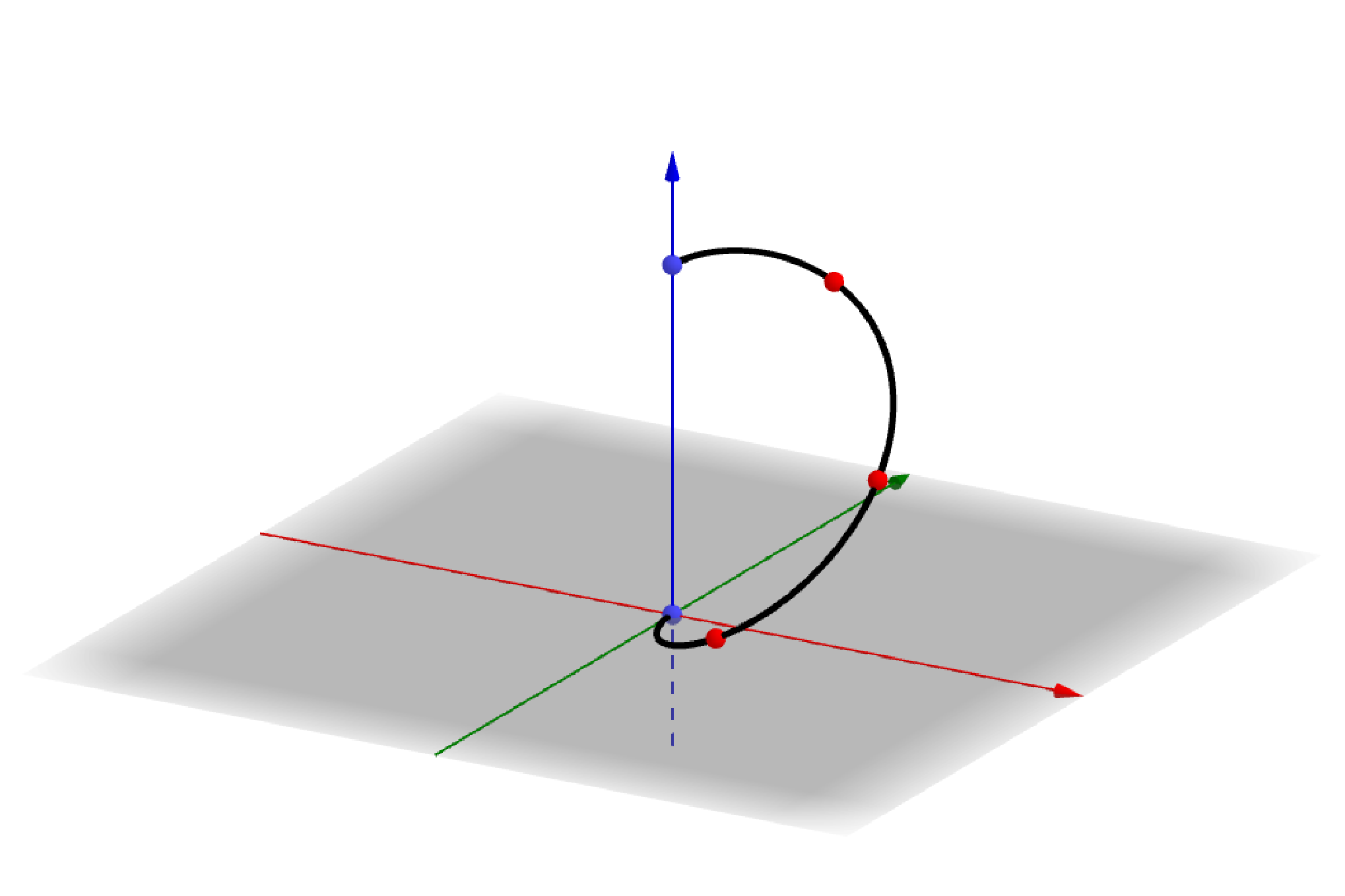}
    \caption{The path $c$}
    \label{fig1.0}
\end{subfigure}
\begin{subfigure}{0.5\textwidth}
    \def\svgwidth{0.9\columnwidth}
    \includegraphics[width=6.5cm]{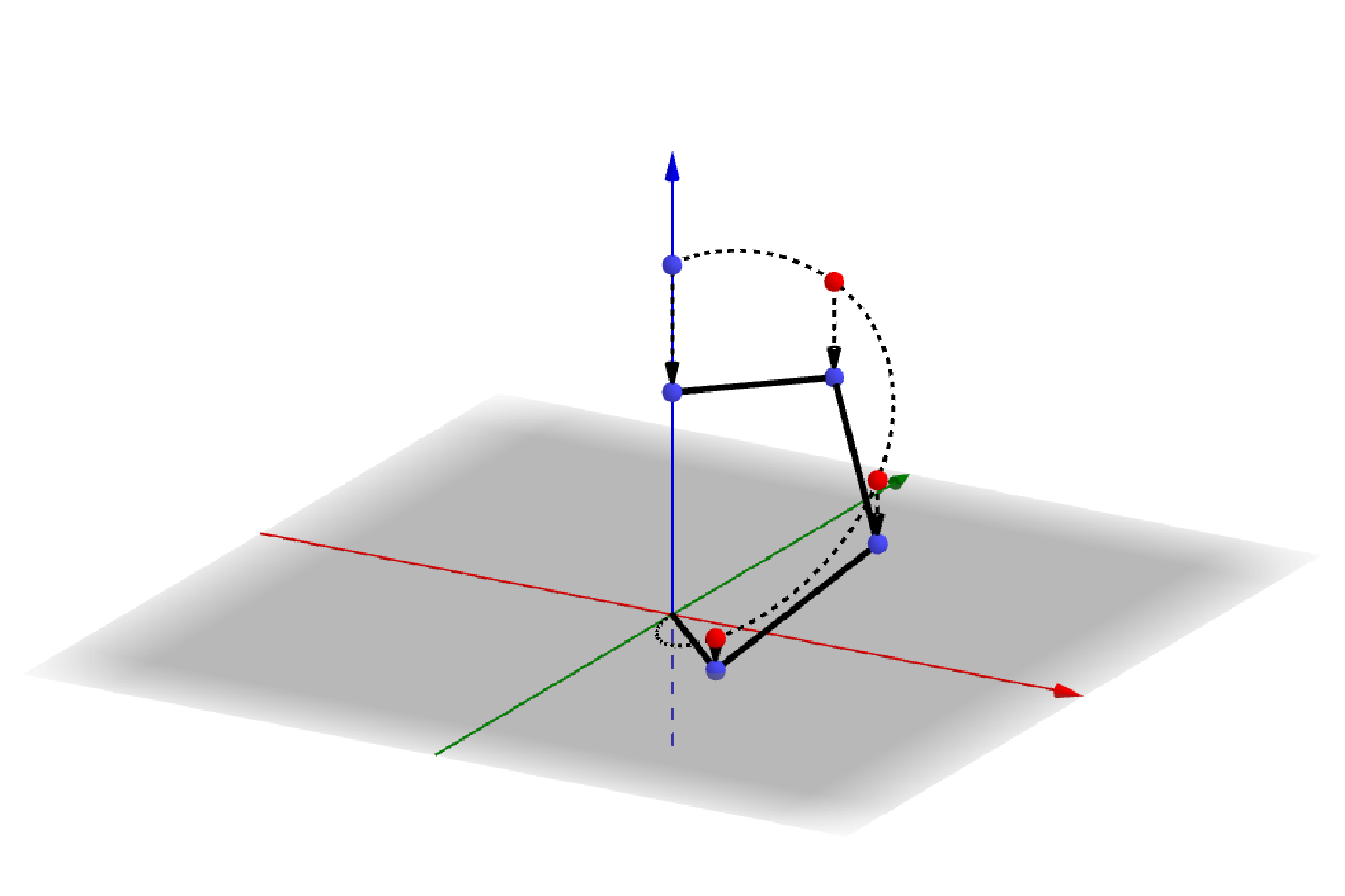}
    \caption{The path $\tilde{c}$}
    \label{fig1.1}
\end{subfigure}
\caption{}
\end{figure}

This $\tilde{c}$ is a horizontal path in $(N,d)$.
Let $\tilde{g}$ be the endpoint of $\tilde{c}$,
and $\tilde{h}_i$ the endpoint of $\tilde{c}_i$.
Hence $\tilde{h}_i=Y_{h_i}$ for $i\in I$ and $\tilde{h}_i=h_i$ for $i\notin I$.
By the triangle inequality,
$d\left(\tilde{g}\right)$ is bounded above by
\begin{equation}\label{equ1}
	\sum_{i\in I}d\left(\tilde{h}_i\right)+\sum_{i\notin I}\frac{t}{M}.
\end{equation}

By using (\ref{equ1}) and Lemma \ref{prop1},
\begin{align*}
d(g)-d\left(\tilde{g}\right)&\geq\sum_{i\in I}\left(\frac{t}{M}-d\left(\tilde{h}_i\right)\right)\\
		       &=\sum_{i\in I}\left(\frac{t}{M}-d\left(Y_{h_i}\right)\right)\\
&=\sum_{i\in I}\left(\frac{t}{M}-\|\pi(h_i)\|_{\infty}\right)\\
&\geq\frac{t}{M}\left(1-R\right)|I|.
\end{align*}
We shall see that $d(g)-d\left(\tilde{g}\right)$ is linearly bounded above by $t$.

Set $h=\tilde{g}^{-1}g$.
By the triangle inequality,
$$d(g)-d\left(\tilde{g}\right)\leq d(h).$$
Since each $\tilde{h}_i^{-1}h_i$ is in the center of $N$,
$$h=\tilde{g}^{-1}g=\tilde{h}_M^{-1}\cdots \tilde{h}_1^{-1}h_1\cdots h_M=\prod_{i\in I}\tilde{h}_i^{-1}h_i=\prod_{i\in I}Y_{h_i}^{-1}h_i\in[N,N].$$
By Lemma \ref{lem89},
we can choose $X,Y\in\partial (B_{\|\cdot\|}(1)))$ such that
\begin{itemize}
	\item $[\exp(X),\exp(Y)]=[X,Y]\in h^{\R_{>0}}\subset N$,
		and

	\item $\|[X,Y]\|_{[N,N]}\geq L$.
	\end{itemize}
Set $r\in\R_{\geq 0}$ such that
$$[\sqrt{r}X,\sqrt{r}Y]=r[X,Y]=h.$$
Then we can construct a horizontal path from $id$ to $h$ (equivalently,
can construct a path from $\tilde{g}$ to $g$ by translating the starting point) by connecting the following four paths:
$c_1(s)=-Xs$,
$c_2(s)=-Ys$,
$c_3(s)=Xs$ and $c_4(s)=Ys$ for $s\in[0,\sqrt{r}]$.
\begin{figure}[htbp]
	\centering
	\includegraphics[width=6.5cm]{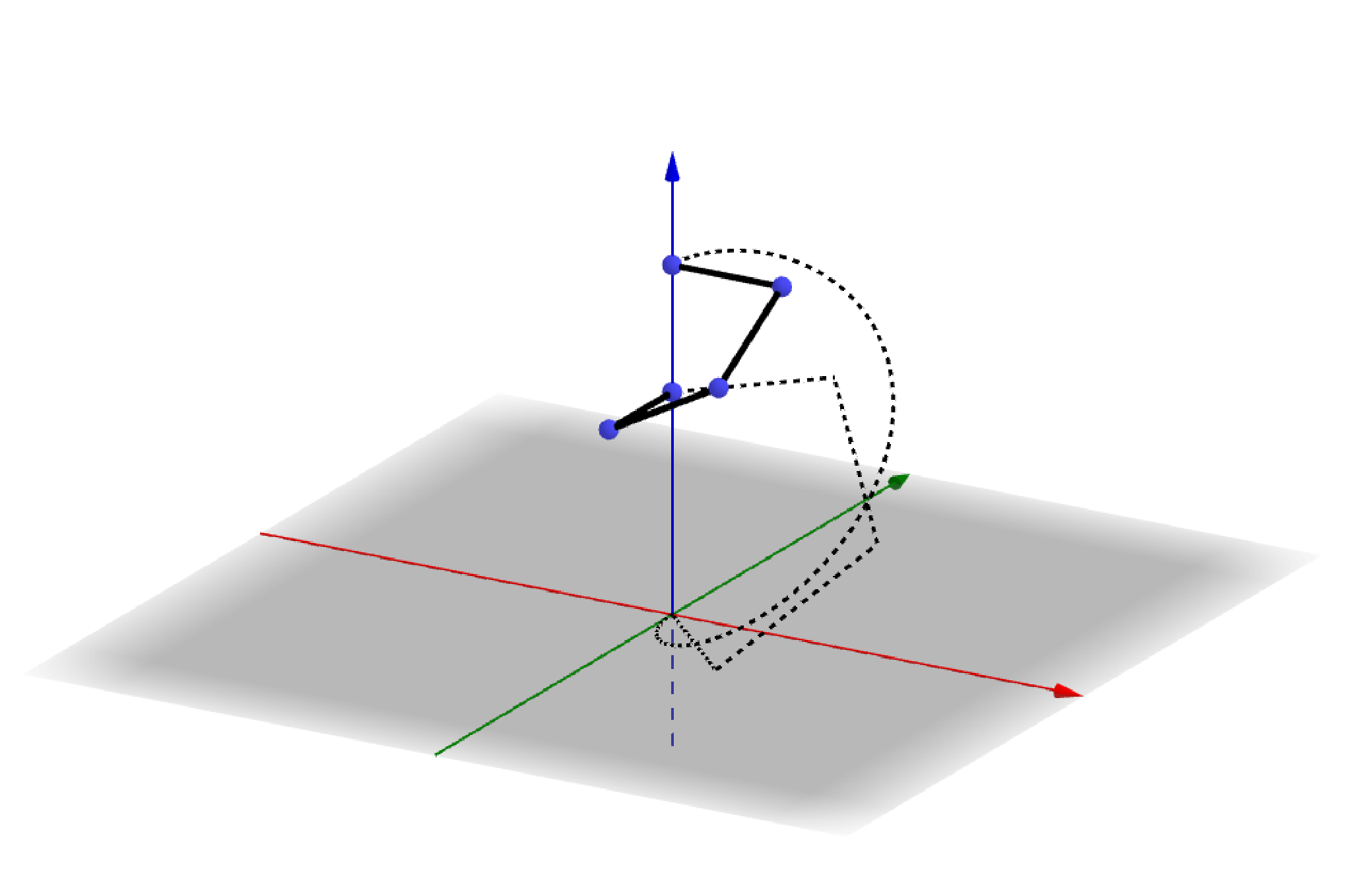}
	\caption{The path from $\tilde{g}$ to $g$}
\end{figure}

\noindent By the triangle inequality,
we obtain
$$d(h)\leq 4\sqrt{r}.$$

By the definition of $X,Y$ and $r$,
$\|h\|_{[N,N]}=\|r[X,Y]\|_{[N,N]}=r\|[X,Y]\|_{[N,N]}$.
Hence we obtain
$$r\leq \frac{\|h\|_{[N,N]}}{L}.$$
Finally we can estimate $\|h\|_{[N,N]}$ by using Lemma \ref{lem90} and Lemma \ref{lem91},
\begin{align*}
	\|h\|_{[N,N]}&=\|\prod_{i\in I}Y_{h_i}^{-1}h_i\|_{[N,N]}\\
	     &\leq\sum_{i\in I}\|Y_{h_i}^{-1}h_i\|_{[N,N]}\\
	     &\leq |I|K_1\left(\max\left\{d_{\infty}(h_i),d_{\infty}(Y_{h_i})\right\}\right)^2\\
	     &\leq |I|K_1\left(\max\left\{K_2d(h_i)+K_2,K_2d(Y_{h_i})+K_2\right\}\right)^2\\
	     &\leq 4K_1K_2^2\frac{t^2}{M^2}|I|.
\end{align*}

To be summarized,
\begin{align*}
\frac{t}{M}(1-R)|I|&\leq d(g)-d\left(\tilde{g}\right)\\
&\leq d(h)\\
&\leq 4\sqrt{r}\\
&\leq 4\sqrt{\frac{\|h\|_{[N,N]}}{L}}\\
&\leq 8K_2\frac{t}{M}\sqrt{\frac{K_1|I|}{L}}.
\end{align*}

Solve the quadratic inequality for $\sqrt{|I|}$,
then we have
$$|I|\leq\frac{64K_1K_2^2}{L(1-R)^2}=\frac{K}{(1-R)^2},$$

where $K=\frac{64K_1K_2^2}{L}$.

\end{proof}
\begin{rmk}
Another choice of a norm may inherit another constant $K>0$,
however it does not affect the later arguments.
If necessary,
we can take the infimum one among obtained $K$ since our method can be applied to any norm.
\end{rmk}

\section{Proof of the main theorem}

In the arguments of Section \ref{sec9},
one does not need the non-singularity.
If $N$ is non-singular,
then we obtain the following lemma.

\begin{lem}\label{lem92}

	There exists $L_0>0$ such that for all $r_1,r_2\in\R_{>0}$ and all $g\in \pi^{-1}(\partial B_{\|\cdot\|_{\infty}}(0,r_1))$,
$$B_{\|\cdot\|_{[N,N]}}(L_0r_1r_2)\subset[g,B_{d_{\infty}}(r_2)].$$

\end{lem}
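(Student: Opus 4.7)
The plan is to reduce the statement to a linear-algebraic fact about $\mathrm{ad}(X) : V_\infty \to [\n,\n]$ and then extract a uniform constant by compactness of the unit sphere in $V_\infty$. Since $N$ is $2$-step, the Baker-Campbell-Hausdorff formula gives $[g,h] = \exp([\log(g),\log(h)])$, and because $[\n,\n]$ is central any components of $\log(g),\log(h)$ in $[\n,\n]$ contribute nothing to the bracket, so $[g,h] = \exp([\pi(g),\pi(h)])$. Combined with the remark after Fact \ref{fact20} that $\pi$ sends $B_{d_\infty}(r_2)$ onto $B_{\|\cdot\|_\infty}(r_2)$, the claim $B_{\|\cdot\|_{[N,N]}}(L_0 r_1 r_2) \subset [g, B_{d_\infty}(r_2)]$ translates, under the identification $[N,N] \cong [\n,\n]$ via $\exp$, into
\[
B_{\|\cdot\|_{[N,N]}}(L_0 r_1 r_2) \subset \mathrm{ad}(\pi(g))\bigl(B_{\|\cdot\|_\infty}(r_2)\bigr).
\]

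Because the bracket is bilinear, it is enough to prove the rescaled statement: there exists $L_0 > 0$ such that for every $X \in \partial B_{\|\cdot\|_\infty}(1)$,
\[
B_{\|\cdot\|_{[N,N]}}(L_0) \subset \mathrm{ad}(X)\bigl(B_{\|\cdot\|_\infty}(1)\bigr),
\]
and the general case follows by replacing $X$ with $\pi(g)/r_1$ and scaling $Y$ by $r_2$. Non-singularity says precisely that for $x \in N \setminus [N,N]$, i.e.\ $\pi(x) \neq 0$, the map $y \mapsto [x,y]$ hits all of $[N,N]$; since this commutator factors through $(\pi(x),\pi(y))$, this is equivalent to $\mathrm{ad}(X) : V_\infty \to [\n,\n]$ being surjective for every $X \in V_\infty \setminus \{0\}$. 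Hence, by the open mapping theorem in finite dimension, the quantity
\[
L(X) := \sup\bigl\{r > 0 : B_{\|\cdot\|_{[N,N]}}(r) \subset \mathrm{ad}(X)(B_{\|\cdot\|_\infty}(1))\bigr\}
\]
is strictly positive for each such $X$.

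It remains to verify that $L_0 := \inf_{\|X\|_\infty = 1} L(X)$ is itself strictly positive. Since $V_\infty$ and $[\n,\n]$ are finite-dimensional and $X \mapsto \mathrm{ad}(X)$ is linear (hence continuous) into $\mathrm{Hom}(V_\infty,[\n,\n])$, and since surjectivity is an open condition on linear maps on which the pseudoinverse depends continuously, the function $X \mapsto L(X)$ is lower semicontinuous on the open set where $\mathrm{ad}(X)$ is surjective. By non-singularity this open set contains all of $V_\infty \setminus \{0\}$, in particular the compact sphere $\partial B_{\|\cdot\|_\infty}(1)$, so the infimum $L_0$ is positive. The main delicate point is precisely this uniformity in $X$; everything else is bookkeeping relying on the $2$-step structure and Fact \ref{fact20}, and the uniformity itself is routine once one observes that all the relevant spaces are finite-dimensional and $\mathrm{ad}$ is linear.
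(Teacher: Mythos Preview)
Your proof is correct and follows the same core idea as the paper's: reduce to the case $r_1 = r_2 = 1$, then use compactness of the unit sphere in $V_\infty$ together with continuity to extract a uniform positive lower bound $L_0$. The paper also hand-waves the continuity step in essentially the same way you do (``since $[\cdot,\cdot]$ is continuous, we may take $L(g)$ continuously''), so your appeal to lower semicontinuity via a continuously varying right inverse is at the same level of rigor.

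Where you differ is in the reduction to the unit case. The paper first proves a sublemma showing that $[g,B_{d_\infty}(r_2)]$ is a compact star-convex neighborhood of the identity, then handles general $r_1,r_2$ by arguing that it suffices to bound the boundary $[g,\exp(\partial B_{\|\cdot\|_\infty}(r_2))]$ away from the identity and scaling the norms there. You instead pass immediately to the Lie algebra via $[g,h]=\exp([\pi(g),\pi(h)])$ and use bilinearity of the bracket to scale in one line, which is cleaner and makes the star-convexity sublemma unnecessary. Both arguments rely on the same two facts you cite: that $\pi$ maps $B_{d_\infty}(r_2)$ onto $B_{\|\cdot\|_\infty}(r_2)$, and that non-singularity is exactly surjectivity of $\mathrm{ad}(X)\colon V_\infty\to[\n,\n]$ for $X\neq 0$.
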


Before the proof,
we confirm some easy facts on $[g,B_{d_{\infty}}(r_2)]$.

\begin{sub}\label{sub91}
	$[g,B_{d_{\infty}}(r_2)]$ is a compact star convex neighborhood around $id\in[N,N]$.
\end{sub}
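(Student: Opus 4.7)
The plan is to verify the three properties --- compact, star-convex around $id$, and neighborhood of $id$ in $[N,N]$ --- separately, exploiting the $2$-step structure of $N$ throughout. The key observation is that because $[\n,\n]$ is central, $[g,h] = \exp([\log g, \log h])$, and writing $\log g = X_g + Y_g$, $\log h = X_h + Y_h$ with $X_* \in V_{\infty}$ and $Y_* \in [\n,\n]$, all mixed terms vanish, leaving $[\log g, \log h] = [X_g, X_h]$. In particular the commutator map factors as
\[
N \xrightarrow{\pi} V_{\infty} \xrightarrow{\operatorname{ad}(X_g)} [\n,\n] \xrightarrow{\exp} [N,N],
\]
so $[g,h]$ really depends on $h$ only through $\pi(h)$.

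Compactness is immediate: $B_{d_{\infty}}(r_2)$ is compact by properness of $(N, d_{\infty})$, and $h \mapsto [g,h]$ is continuous. For star convexity, I would take $z = [g,h] \in [g, B_{d_{\infty}}(r_2)]$ and $s \in [0,1]$ and replace $h$ by $\delta_s(h)$. Since $\log \delta_s(h) = sX_h + s^2 Y_h$, the same centrality argument gives $[g, \delta_s h] = \exp(s[X_g, X_h]) = \exp(s \log z)$, which is exactly the point at parameter $s$ on the segment from $id$ to $z$ in the vector space $[N,N]$. Fact \ref{fact20}(b) gives $d_{\infty}(\delta_s h) = s\, d_{\infty}(h) \le r_2$, so $\delta_s h$ is still in $B_{d_{\infty}}(r_2)$ and the scaled point remains in the set.

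The one step where non-singularity enters, and where I expect the real content to lie, is the neighborhood property. Using the remark after Fact \ref{fact20} that $\pi$ sends $r_2$-balls in $(N, d_{\infty})$ onto $r_2$-balls in $(V_{\infty}, \|\cdot\|_{\infty})$, the factorization above gives
\[
[g, B_{d_{\infty}}(r_2)] = \exp\bigl([X_g, B_{\|\cdot\|_{\infty}}(r_2)]\bigr).
\]
Because $\|\pi(g)\|_{\infty} = r_1 > 0$, we have $X_g \ne 0$ and so $g \notin [N,N]$; non-singularity is exactly the statement that $\operatorname{ad}(X_g) \colon V_{\infty} \to [\n,\n]$ is surjective. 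A surjective linear map between finite-dimensional normed spaces is open, so the image of $B_{\|\cdot\|_{\infty}}(r_2)$ contains an open ball around $0$ in $[\n,\n]$, whose $\exp$ is a neighborhood of $id$ in $[N,N]$. This is essentially a one-line appeal to open mapping once the factorization is in place; the quantitative refinement $L_0 r_1 r_2$ in Lemma \ref{lem92} will need the additional step of making the open-mapping estimate uniform in $X_g$, which is where compactness of the unit sphere of $V_{\infty}$ and the scaling $\operatorname{ad}(X_g) = r_1 \operatorname{ad}(X_g/r_1)$ will come in.
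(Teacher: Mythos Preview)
Your proof is correct and follows essentially the same route as the paper. The paper handles compactness and the neighborhood property in one sentence each (``$[g,\cdot]$ is continuous'' and ``$[g,\cdot]$ is a submersion by non-singular condition''); your factorization through $\operatorname{ad}(X_g)$ and the open-mapping observation just make the submersion claim explicit. For star-convexity the paper first rewrites the commutator as $[g,\exp(Y)]$ with $Y\in B_{\|\cdot\|_{\infty}}(r_2)\subset V_{\infty}$ and then scales $Y\mapsto sY$, whereas you apply $\delta_s$ directly to $h\in N$; since the commutator depends only on $\pi(h)$ and $\delta_s$ acts as multiplication by $s$ on $V_{\infty}$, the two scalings coincide and both invoke Fact~\ref{fact20}(b) in the same way.
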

\begin{proof}

$[g,B_{d_{\infty}}(r_2)]$ is compact since the mapping $[g,\cdot]:N\to[N,N]$ is continuous and $B_{d_{\infty}}(r_2)$ is compact.
Moreover it is a neighborhood around the identity since $[g,\cdot]$ is a submersion by non-singular condition.

Next we show the set is star convex.
For any $h\in[g,B_{d_{\infty}}(r_2)]$,
we can choose $Y\in B_{\|\cdot\|_{\infty}}(r_2)$ such that $h=[g,\exp(Y)]$.
By Campbell-Baker-Haudorff formula,
for $s\in[0,1]$,
$$h^s=[g,\exp(Y)]^s=[g,\exp(sY)].$$
By Fact \ref{fact20}(b),
$$d_{\infty}(\exp(sY))=sd_{\infty}(\exp(Y))\leq r_2.$$
Hence $h^s\in[g,B_{d_{\infty}}(r_2)]$,
that is,
$[g,B_{d_{\infty}}(r_2)]$ is star convex.

\end{proof}

\begin{rmk}\label{rmk90}
	From the proof of Sublemma \ref{sub91},
	we have
	$$\partial[g,B_{d_{\infty}}(r_2)]=[g,\exp(\partial B_{\|\cdot\|_{\infty}}(r_2))].$$
\end{rmk}

\begin{proof}[Proof of Lemma \ref{lem92}]
First of all,
we find $L_0>0$ such that for any\\
$g\in \pi^{-1}\left(\partial B_{\|\cdot\|_{\infty}}(1)\right)$,
$B_{\|\cdot\|_{[N,N]}}(L_0)\subset[g,B_{d_{\infty}}(1)]$.

By Sublemma \ref{sub91},
there exists $L(g)>0$ such that
$$B_{\|\cdot\|_{[N,N]}}(L(g))\subset[g,B_{d_{\infty}}(1)].$$
We can assume $L(g_1)=L(g_2)$ if $\pi(g_1)=\pi(g_2)$,
since\\
$[g_1,B_{d_{\infty}}(1)]=[g_2,B_{d_{\infty}}(1)]$.
Since $[\cdot,\cdot]$ is continuous,
we may take $L(g)$ continuously for $g\in\pi^{-1}(\partial B_{\|\cdot\|_{\infty}}(1))$.
Hence
$$L_0=\min\{L(g)|g\in\pi^{-1}(\partial B_{\|\cdot\|_{\infty}}(1))\}$$
exists and is non-zero.

Next we consider the general case.
Since $[g,B_{d_{\infty}}(r_2)]$ is star convex,
we only need to show that all points at the boundary of $[g,B_{d_{\infty}}(r_2)]$ are at least $L_0r_1r_2$ away from the identity.\\
$h\in\partial[g,B_{d_{\infty}}(r_2)]$ is represented by $h=[\exp(\pi(g)),\exp(Y)]$,
where $Y\in\partial B_{\|\cdot\|_{\infty}}(r_2)$ as we mentioned in Remark \ref{rmk90}.
Set $X'=\frac{1}{r_1}\pi(g)$ and $Y'=\frac{1}{r_2}Y$,
then we have
$$\|h\|_{[N,N]}=\|[\exp(\pi(g)),\exp(Y)]\|_{[N,N]}=r_1r_2\|[\exp(X'),\exp(Y')]\|_{[N,N]}\geq L_0r_1r_2.$$

\end{proof}

\begin{rmk}\label{rmkinv}

We can replace $[g,B_{d_{\infty}}(r_2)]$ to $[g,B_d(r_2)]$ in Lemma \ref{lem92},
since $\pi(B_d(r_2))=\pi(B_{d_{\infty}}(r_2))$ implies
$$[g,B_{d_{\infty}}(r_2)]=[g,B_d(r_2)].$$

\end{rmk}

By using the previous lemmas,
we show the following theorem,
which is a precise statement of Theorem \ref{thm3}.

\begin{thm}\label{thmneo}
	Let $N$ be a simply connected non-singular $2$-step nilpotent Lie group endowed with a left invariant subFinsler metric $d$,
	and $(N,d_{\infty},id)$ the asymptotic cone of $(N,d,id)$.
	Then there is $C>0$ such that for any $g\in N$,
	$$\left|d(g)-d_{\infty}(g)\right|<C.$$
\end{thm}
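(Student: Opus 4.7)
The plan is to establish the two-sided bound $|d(g) - d_\infty(g)| \leq C$, treating the two inequalities $d_\infty(g) \leq d(g) + C$ and $d(g) \leq d_\infty(g) + C$ via parallel arguments. Both rely on Proposition \ref{prop90} for structural control of geodesics together with Lemma \ref{lem92}, the quantitative non-singularity input. Since Lemma \ref{lem91} and properness handle the regime where $d(g)$ is bounded, I focus throughout on the case $d(g) = t$ large.

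For the direction $d_\infty(g) \leq d(g) + C$: take a $d$-geodesic $c$ from $id$ to $g$ of length $t$, partition it into $M$ equal pieces with endpoint-differences $h_i$, and apply Proposition \ref{prop90} with some $R$ close to $1$ so that the bad set $I = I(c, M, R)$ has $|I| \leq K/(1-R)^2$, independent of $M$ and $t$. Project $c$ to $V_\infty$ via $\pi$; since $\pi$ is $1$-Lipschitz from $(N, d)$ to $(V_\infty, \|\cdot\|_\infty)$, the projected path has length at most $t$. Lift it to a $V_\infty$-horizontal path $c_\infty$ in $(N, d_\infty)$ of the same length, ending at some $g_\infty$ with $\pi(g_\infty) = \pi(g)$, so that the discrepancy $h := g_\infty^{-1} g$ lies in $[N,N]$. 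The remaining task is to correct $h$ by a $d_\infty$-loop of uniformly bounded length.

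Non-singularity enters essentially at this last step. By Lemma \ref{lem92}, for any $g_0$ with $\|\pi(g_0)\|_\infty = r_1$ one can write $h = [g_0, \exp Z]$ with $\|Z\|_\infty \leq \|h\|_{[N,N]}/(L_0 r_1)$. A direct BCH computation, using the piecewise structure of $c$, gives the bound $\|h\|_{[N,N]} = O(t)$: good pieces ($i \notin I$) contribute individually controlled central-drift terms, while the $O(1)$ bad pieces are bounded by Lemma \ref{lem90} together with Lemma \ref{lem91}. The crucial flexibility of Lemma \ref{lem92} — that $g_0$ may be chosen as \emph{any} point with $\|\pi(g_0)\|_\infty = r_1$ — lets us pick $g_0$ already lying on the lifted path $c_\infty$, so that the $g_0$-leg of the commutator loop can be folded into the existing trajectory rather than traversed anew. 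Taking $r_1 \sim t$ then makes $\|Z\|_\infty = O(1)$, and the extra cost of the folded loop is $2\|Z\|_\infty = O(1)$, giving $d_\infty(g) \leq t + C$.

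The reverse direction $d(g) \leq d_\infty(g) + C$ proceeds by the symmetric argument: a $d_\infty$-geodesic projects isometrically to $V_\infty$ by Fact \ref{fact20}(a), lifts piecewise to a $V$-horizontal path in $(N, d)$ via Lemma \ref{prop1} (which preserves lengths), and the central discrepancy is corrected by the analogous commutator loop in $(N, d)$ using Remark \ref{rmkinv} in place of Lemma \ref{lem92}. The main obstacle, common to both directions, is the folding argument: a naive commutator correction would cost $\sim 2 r_1 = O(t)$, destroying the additive bound, and only by exploiting the flexibility supplied by non-singularity to place $g_0$ on the already-traversed path does the extra cost collapse to $O(1)$. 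A delicate point is the edge case where $\|\pi(g)\|_\infty$ is small compared to $t$, so that no intermediate point of the main path has large $\pi$-norm; here one must handle the central-dominated regime separately, exploiting Fact \ref{fact20}(b) and the scaling $d_\infty(h) \asymp \sqrt{\|h\|_{[N,N]}}$ for $h \in [N,N]$ to verify that $d$ and $d_\infty$ still differ by $O(1)$.
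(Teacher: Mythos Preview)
Your overall strategy---project the $d$-geodesic, lift to a $d_\infty$-horizontal path, bound the central discrepancy $h$, then correct via a commutator folded into the path---matches the paper's. The gap is in the correction step. You want a \emph{single} commutator $[g_0,\exp Z]$ with $g_0=c_\infty(s_0)$ a point on the lifted path and $r_1=\|\pi(g_0)\|_\infty\sim t$, so that $\|Z\|_\infty=O(1)$. But nothing you have established guarantees that $\pi\circ c$ ever reaches distance comparable to $t$ from the origin; a curve of length $t$ in $V_\infty$ can certainly stay in a ball of radius $o(t)$, and ruling this out for projections of $d$-geodesics would require a separate argument you do not give. Your edge-case remark also misidentifies the issue (it is $\max_s\|\pi(c(s))\|_\infty$ that matters, not $\|\pi(g)\|_\infty$; already in the Heisenberg example $\pi(g)=0$ while the projected geodesic reaches radius $\sim t$), and the proposed fix is too vague to count as a proof.

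The paper avoids this entirely by \emph{distributing} the correction across the increments $h_i$ for $i\notin I$: for each good $i$ one conjugates the $i$-th segment of the lifted path by a short segment $\exp(\pm rX_i)$, producing a central term $[h_i,\exp(rX_i)]$ in the direction of $h$, and chooses a common $r$ so that the product over $i\notin I$ equals $h$. This is exactly where Proposition~\ref{prop90} does its real work: it supplies $M-|I|$ good increments each with $\|\pi(h_i)\|_\infty\geq Rt/M$, so that by Lemma~\ref{lem92} one has $\sum_{i\notin I}\|[h_i,X_i]\|_{[N,N]}\gtrsim t$, and hence the total added length $2r(M-|I|)$ is $O(1)$. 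You invoke Proposition~\ref{prop90} only for the bound $\|h\|_{[N,N]}=O(t)$, where it is in fact unnecessary---your continuous lift already gives this directly, and the paper achieves it instead via a second subdivision into $mM\approx t$ unit-scale pieces, not the coarse $M$-partition your justification describes---rather than where it is essential, namely to supply the handles for the folded commutators.
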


\begin{proof}

First we show that $d_{\infty}(g)-d(g)$ is uniformly bounded above.
Fix $0<R<1$ and $M>0$ sufficiently large so that $M-|I(c,M,R)|\neq0$ for any geodesic $c$ with $length(c)\geq M$.
It is possible by Proposition \ref{prop90}.
It suffices to show the case where $g\in N\setminus B_d(M)$,
since $d$ and $d_{\infty}$ are proper metrics on $N$.

Let $t=d(g)$ and $c$ a geodesic from $id$ to $g$ in $(N,d)$.
We will construct a horizontal path $\breve{c}$ in $(N,d_{\infty})$ which starts at the identity and ends at $g$,
and show that the length of $\breve{c}$ is not so long relative to that of $c$.

It needs two steps to construct a path $\breve{c}$.
First,
Deform $c$ into a horizontal path $\tilde{c}$ in $(N,d_{\infty})$ as follows.

\begin{itemize}

\item[(1)] Divide $c$ into $M$ pieces of geodesics $c_i$ as in Proposition \ref{prop90}.
Since $d$ is left invariant,
we may see each $c_i$ a geodesic from $id$ to $h_i\in N$ with $h_1\cdots h_M=g$.
\item[(2)] Divide $c_i$ into $m=\left[\frac{t}{M}\right]$ pieces of geodesics $c_{ij}$ and set $h_{ij}$ in the same way,
	where $[~\cdot~]$ is the Gaussian symbol.
\item[(3)] Set $\tilde{c}$ the concatenation of $\tilde{c_{ij}}(s)=s\frac{\pi(h_{ij})}{\|\pi(h_{ij})\|_{\infty}}$,
	$s\in[0,\|\pi(h_{ij})\|_{\infty}]$.
	By Lemma \ref{prop1},
	$length\left(\tilde{c}\right)=\sum\|\pi(h_{ij})\|_{\infty}\leq\sum d(h_{ij})=d(g)$.
\item[(4)] Let $\tilde{c}_i$ be the concatenation of paths $\tilde{c}_{i1},\dots,\tilde{c}_{im}$.
\end{itemize}

Let $\tilde{g}$ be the endpoint of $\tilde{c}$ and set $h=\tilde{g}^{-1}g$.
Since $\pi(h_{ij})^{-1}h_{ij}\in[N,N]$,
$$h=\pi(h_{Mm})^{-1}\cdots \pi(h_{11})^{-1}h_{11}\cdots h_{Mm}\in[N,N].$$

By using the path $\tilde{c}$,
we shall construct a horizontal path $\breve{c}$.
By definition of $I$,
$\|\pi(h_i)\|_{\infty}\geq Rd(h_i)=R\frac{t}{M}$ for $i\notin I$.
In particular,
$h_i\notin[N,N]$ for $i\notin I$.
By Lemma \ref{lem92},
there exists $X_i\in\partial\pi(B_{\|\cdot\|_{\infty}}(1)))$ such that $[h_i,X_i]\in h^{\R_{>0}}$ and that $\|[h_i,X_i]\|_{[N,N]}\geq L_0$.
Set $r\in\R_{\geq 0}$ so that
$$\prod_{i\notin I}[h_i,rX_i]=(\prod_{i\notin I}[h_i,X_i])^r=h.$$
Define $\breve{c}$ as the concatenation of $\breve{c}_i$,
$i=1,\dots,M$,
given as follows.
\begin{itemize}
	\item[(1)] For $i\notin I$,
		let $\breve{c}_i$ be a concatenation of three paths;
		$\breve{c}_{i1}(s)=-sX_i ~~(s\in[0,r])$,
		$\tilde{c}_i$,
		and $\breve{c}_{i2}(s)=sX_i ~~(s\in[0,r])$.
		Hence the length of $\breve{c}_i$ is $2r+\sum_j\|\pi(h_{ij})\|_{\infty}$.
	\item[(2)] For $i\in I$,
		set $\breve{c}_i=\tilde{c}_i$.
\end{itemize}

\begin{figure}[h]
\begin{subfigure}{0.5\textwidth}
    \def\svgwidth{0.9\columnwidth}
    \includegraphics[width=6.5cm]{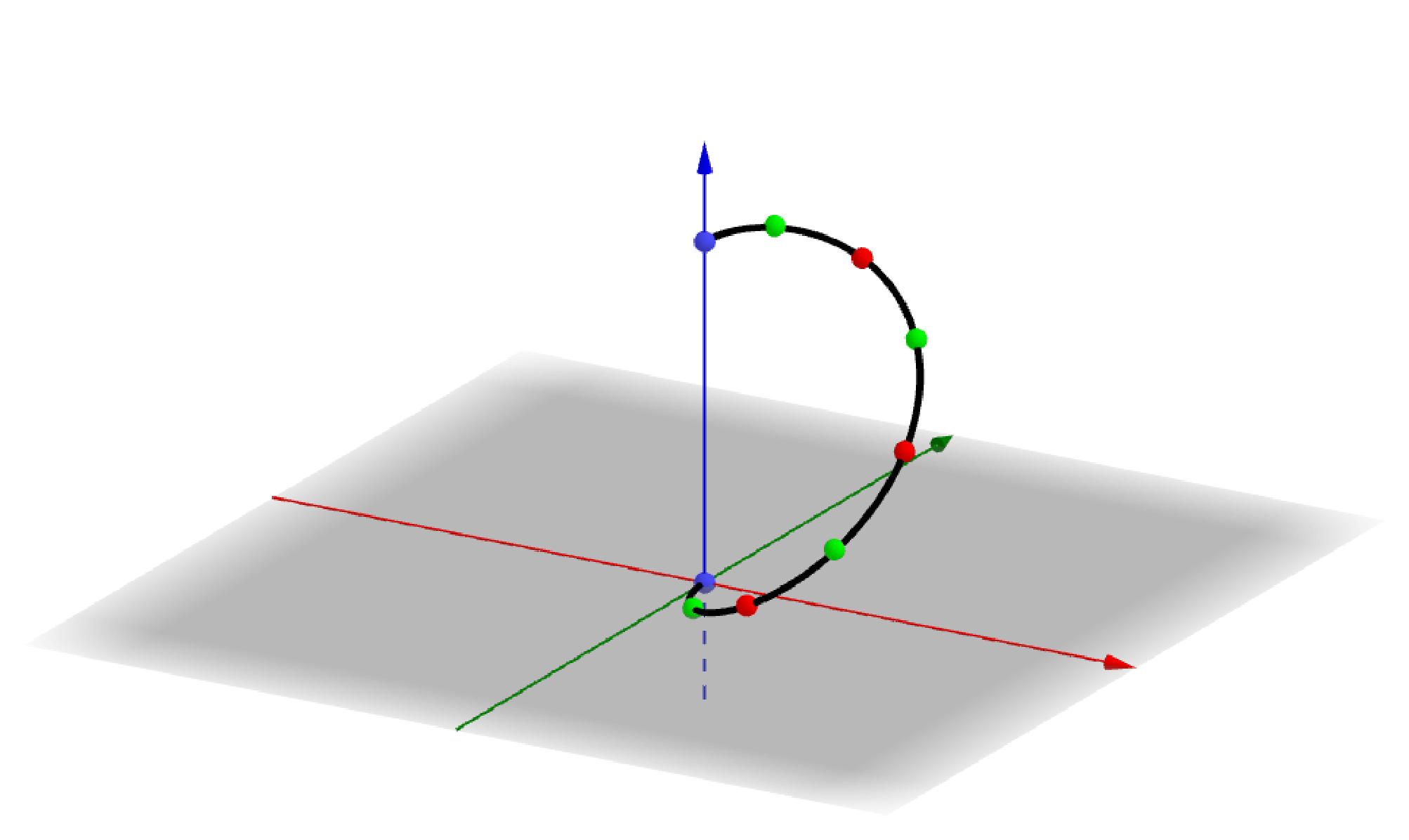}
    \caption{The path $c$}
    \label{fig2.1}
\end{subfigure}
\begin{subfigure}{0.5\textwidth}
    \def\svgwidth{0.9\columnwidth}
    \includegraphics[width=6.5cm]{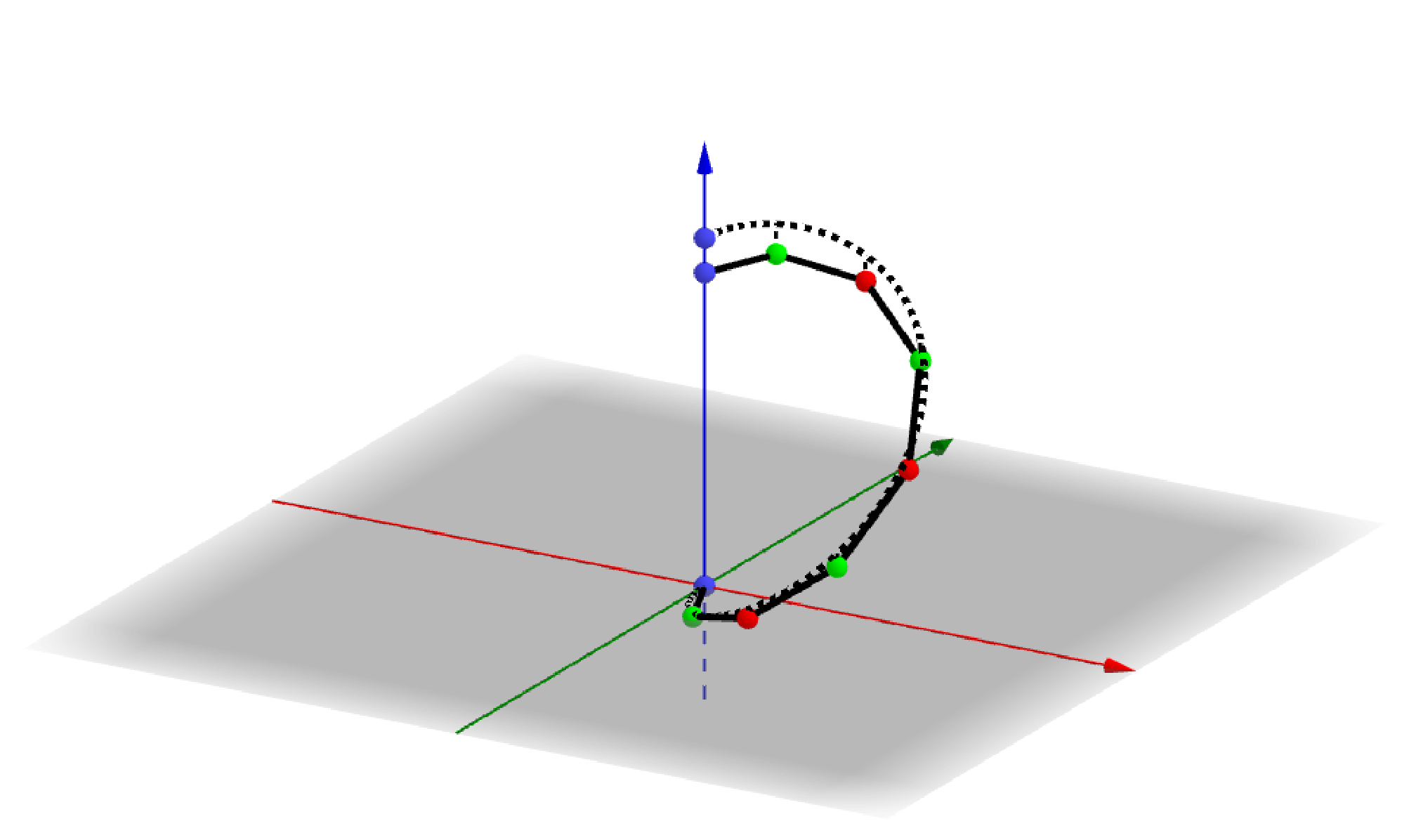}
    \caption{The path $\tilde{c}$}
    \label{fig2.2}
\end{subfigure}
\caption{}
\end{figure}

\begin{figure}[h]
\begin{subfigure}{0.5\textwidth}
    \def\svgwidth{0.9\columnwidth}
    \includegraphics[width=6.5cm]{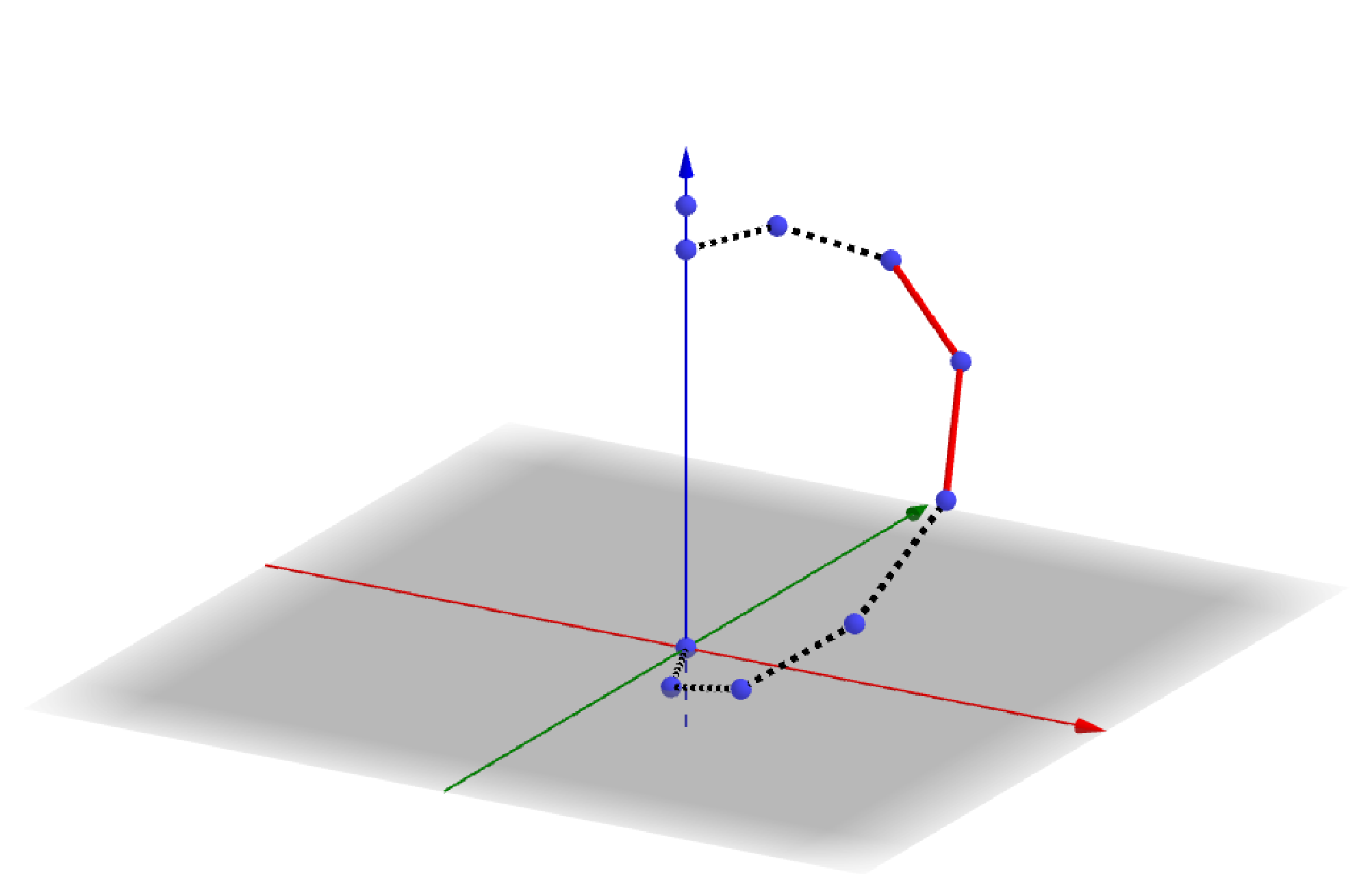}
    \caption{The subpath $\tilde{c}_i$ for $i\notin I$}
    \label{fig2.3}
\end{subfigure}
\begin{subfigure}{0.5\textwidth}
    \def\svgwidth{0.9\columnwidth}
    \includegraphics[width=6.5cm]{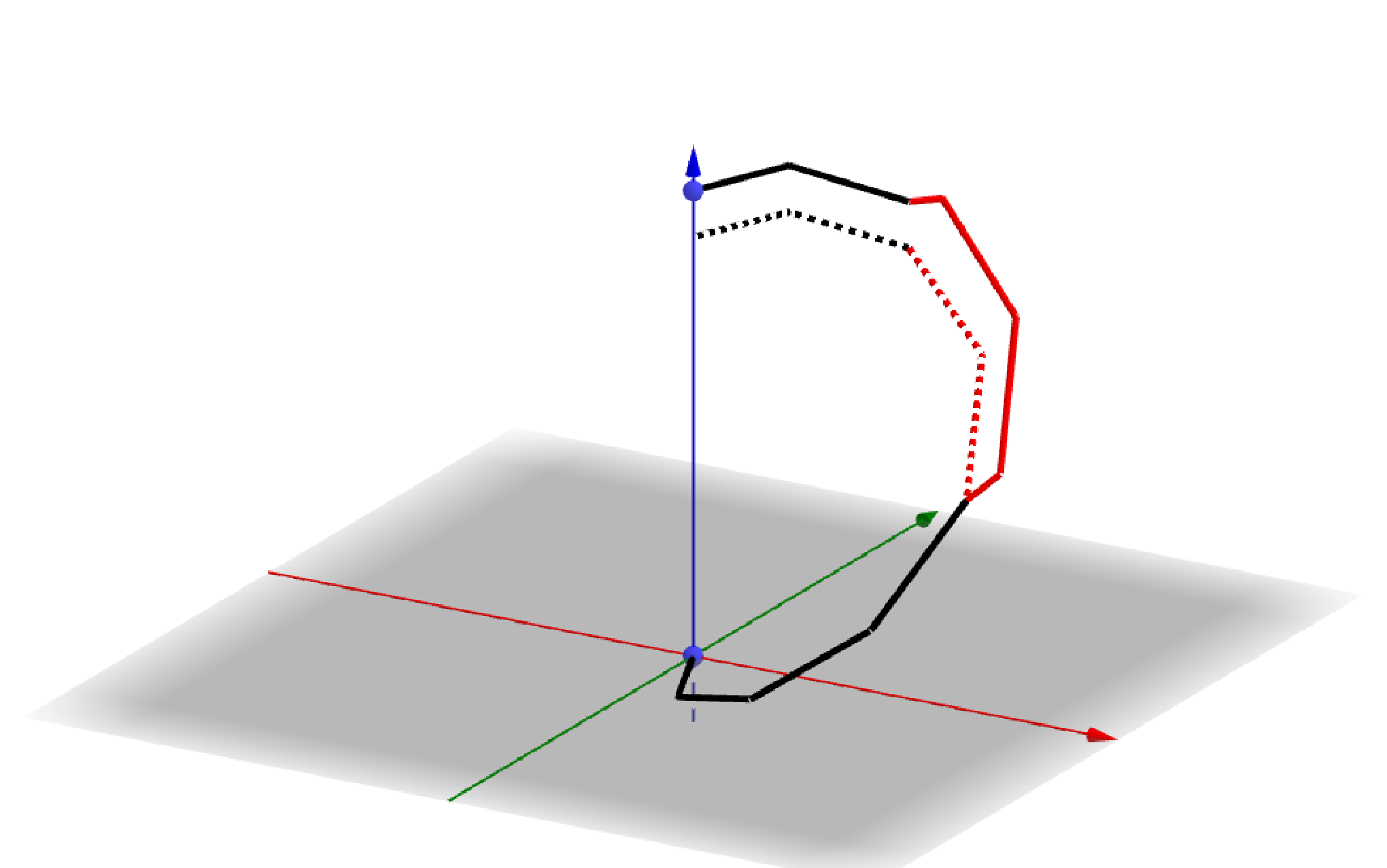}
    \caption{The path $\breve{c}$}
    \label{fig2.4}
\end{subfigure}
\caption{}
\end{figure}

This path $\breve{c}$ starts at the identity and ends at $g$ by the Campbell-Baker-Hausdorff formula.
The length of $\breve{c}$ is
$$length\left(\breve{c}\right)=\sum\|\pi(h_{ij})\|_{\infty}+2r(M-|I|)\leq d(g)+2r(M-|I|).$$
The rest of the proof is to find an upper bound of $r(M-|I|)$.

By Lemma \ref{lem92},
	$$\|[h_i,X_i]\|_{[N,N]}=\|[\pi(h_i),X_i]\|_{[N,N]}\geq R\frac{t}{M}L_0.$$
Since each $[h_i,rX_i]$ is in $h^{\R_{>0}}$,
we obtain
\begin{equation}\label{eq110}
	\|h\|_{[N,N]}=\left\|\prod_{i\notin I}[h_i,rX_i]\right\|_{N,N]}=r\sum_{i\notin I}\|[h_i,X_i]\|_{[N,N]}\geq r(M-|I|)R\frac{t}{M}L_0.
\end{equation}
Hence our goal is changed to find an upper bound of $\|h\|_{[N,N]}$.
By using Lemma \ref{lem90} and Lemma \ref{lem91},

\begin{align*}
	\|h\|_{[N,N]}&=\left\|\pi(h_{mM})^{-1}\cdots \pi(h_{11})^{-1}h_{11}\cdots h_{mM}\right\|_{[N,N]}\\
		     &=\left\|\prod\pi(h_{ij})^{-1}h_{ij}\right\|_{[N,N]}\\
&\leq\sum\|\pi(h_{ij})^{-1}h_{ij}\|_{[N,N]}\\
&\leq mM\sup\left\{\|g^{-1}h\|_{[N,N]}~\bigg\vert~g,h\in B_d\left(\frac{t}{mM}\right),g^{-1}h\in[N,N]\right\}\\
&\leq mM\sup\left\{\|g^{-1}h\|_{[N,N]}~\bigg\vert~g,h\in B_{d_{\infty}}\left(K_2\frac{t}{mM}+K_2\right),g^{-1}h\in[N,N]\right\}\\
&\leq mMK_1\left(K_2\frac{t}{mM}+K_2\right)^2\\
&\leq 4mMK_1K_2^2\frac{t^2}{m^2M^2}\\
&\leq 4tK_1K_2^2.
\end{align*}
Hence $\|h\|_{[N,N]}$ is linearly bounded by $t$.

Combining with the equation(\ref{eq110}),
we obtain
$$r(M-|I|)\leq \frac{4K_1K_2^2M}{RL_0}.$$
We have constructed a path $\breve{c}$ which is sufficiently short relative to the original path $c$,
hence we have
$$d_{\infty}(g)\leq d(g)+\frac{8K_1K_2^2M}{RL_0}.$$

The other side of the inequality follows in a similar way.
The difference is only the construction of $\tilde{c}$ and $\breve{c}$.
In the construction of $\tilde{c}$,
we let $\tilde{c}_{ij}(s)=s\frac{Y_{h_{ij}}}{\|Y_{h_{ij}}\|}$.
In the construction of $\breve{c}$,
we let $\breve{c}_{i1}(s)=-sY_{X_i}$ and $\breve{c}_{i2}(s)=sY_{X_i}$.
The rest of the proof follows in the same way.
\end{proof}

\end{document}